\documentclass[preprint,12pt,times]{article}
\usepackage{graphicx}
\usepackage{amsthm}
\usepackage{amssymb}
\usepackage{amsmath}
\usepackage{amscd}
\usepackage{bbm}
\usepackage{float}
\usepackage{booktabs}
\numberwithin{equation}{section}

\newtheorem{theorem}{Theorem}[section]
\newtheorem{lemma}{Lemma}[section]
\newtheorem{remark}{Remark}[section]

\usepackage[numbers,sort&compress]{natbib}

\numberwithin{figure}{section}
\numberwithin{table}{section}


\usepackage{pgfplots}
\hoffset=10pt 
\voffset=-30pt 
%
 \textheight=214truemm
 \textwidth=154truemm
 \topmargin=+8truemm
 \oddsidemargin=2truemm
 \setlength{\evensidemargin}{\oddsidemargin}
 \footnotesep=9pt

\newcommand\btd{\raise 2pt \hbox{$\hat\bigtriangledown$}\hskip 1.5pt}
\newcommand\bt{\raise 2pt \hbox{$\bigtriangledown$}\hskip 1.5pt}

\begin{document}
\date{}
\title{Kinetic energy and streamline properties for irrotational equatorial wind waves}
\author{Jian Li, ~Shaojie Yang\thanks{Corresponding author: jianli\_jakura@163.com(Jian Li); shaojieyang@kust.edu.cn (Shaojie Yang)} \\~\\
\small Department of  Mathematics,~~Kunming University of Science and Technology,  \\
\small Kunming, Yunnan 650500, China}

\date{}
\maketitle
\begin{abstract}
~\\In this paper, we investigate kinetic energy and streamline properties  for an irrotational periodic geophysical traveling surface water waves  propagating in equatorial oceanic regions. Relying on the methods from  complex analysis, we prove the logarithmic convexity and monotonicity of specific flow variables. By means of conformal mappings, we derive some qualitative results for kinetic energy and streamline, such as streamline time-period being independent of any moment and any point on the streamline in steady flow, the concavity and monotonicity of total kinetic energy within the region between two streamlines and the convexity and monotonicity of total kinetic energy over a streamline time-period. Moreover, we present several results about irrotational equatorial wind waves, such as an upper bound of the minimum of streamline time-period, an upper bound of the maximum of area within the region between two streamlines. Taking advantage of the Bernoulli's law and the Schwarz reflection principle, we show that the extremum of the kinetic energy is attained on the free surface for irrotational equatorial wind waves.\\

\noindent\emph{Keywords}: Irrotational equatorial wind waves; Kinetic energy; Streamline; Conformal mapping\\

\noindent\emph{Mathematics Subject Classification}:~76B15; 30C20\\
\end{abstract}
\noindent\rule{16.5cm}{0.5pt}

\section{Introduction}
Equatorial  waves exhibit particular dynamics due to the vanishing of the Coriolis parameter along the Equator.
The equatorial region throughout the extent of the Pacific Ocean (about $13,000~km$) is the Equatorial Undercurrent (EUC)\cite{r1}. Owing to equatorial winds that blow westward, the surface water flow is directed westward \cite{r2}, and the flow reverses at a depth of several tens of meters \cite{r3}. The Pacific EUC is confined to a shallow surface layer which is less than $200~m$ beneath the surface \cite{r4}, and which is symmetric about the Equator, it is typically about $300~km$ in width and symmetric about the Equator, with a maximum speed of about $1~ m/s$ \cite{r5}.
Equatorial  waves are highly fascinating from both physical and mathematical perspectives. Since the Coriolis forces vanish at the Equator, one can  apply the $f$-plane approximation to the full geophysical governing equations in the equatorial region \cite{gallagher2007influence,r2, constantin2012modelling}. Furthermore, since the equator acts as a natural waveguide, equatorial waves are predominantly zonal, allowing us to approximate the flows as being two-dimensional. Recently, it has been shown that a number of qualitative analytical features, which apply to gravity waves with vorticity, similarly apply to equatorial waves\cite{fan2019mean,constantin2011nonlinear,constantin2012modelling,constantin2013equatorial,r6,r7,r8,r9,r10,r11,r12,r13,r14}.

\subsection{The governing equations}
We choose a frame reference with the origin at a point on the Earth's surface, with the $x$-axis chosen horizontally due east, the $y$-axis horizontally due north and the $z$-axis upwards. Let $z=-d$ be the lower boundary of the layer to which the influence of wind is constrained, and $z=\eta(t,x,y)$ be the free surface of the ocean. In the region $-d \leq z \leq \eta(t,x,y)$, the governing equations in the $f$-plane approximation near the Equator are the Euler equations \cite{gallagher2007influence}:
\begin{equation}
    \begin{cases}
    \begin{aligned}
    &u_{t} + u u_{x} + v u_{y} + w u_{z} + 2\omega w &&= -\frac{1}{\rho} P_x, \\
    &v_{t} + u v_{x} + v v_{y} + w v_{z}             &&= -\frac{1}{\rho} P_y,\\
    &w_{t} + u w_{x} + v w_{y} + w w_{z} - 2\omega u &&= -\frac{1}{\rho} P_z -g.\\
    \end{aligned}
    \end{cases}
\end{equation}
Since the fact that the density of water is constant, the equation of mass conservation takes the form
\begin{align}\label{2.2}
u_x+v_y+w_z=0.
\end{align}
Here $t$ represents time, $(u,v,w)$ is the velocity field, $\omega = 73 \times 10^{-6}~ rad/s$
is the constant rotational speed of the Earth round the polar axis towards the east, $P$ is the pressure, $\rho$ is the constant density of water, and $g=9.8 ~m/s^{2}$ is the constant gravitational acceleration on the surface of the Earth. On the water free surface, the pressure of the fluid corresponds to the atmospheric pressure $P_{atm}$, that is
\begin{align}
P= P_{atm} \quad \text{on  }~ z = \eta(t,x,y).
\end{align}
Since the assumption of impermeability within the flat bed, we enforce the condition of zero flux
\begin{align}
w=0\qquad\textrm{on}~ z=-d.
\end{align}
Moreover, the free surface of wave consists of identical fluid particles at each moment in time, so the associated boundary conditions are
\begin{align}
w=\eta_{t}+u \eta_x+v \eta_y \quad\text{on}~ z=\eta(t,x,y).
\end{align}
In the present paper, we consider two-dimensional flows, moving in the zonal direction along the Equator independent of the $y$-axis and $v\equiv0$ throughout the flow. Since the flow is irrotational, then we have
\begin{align}\label{2.6}
u_z-w_x=0.
\end{align}
Furthermore, since we assume that the flow presents no stagnation points, that is
\begin{align}\label{2.7}
u(x,z)>c.
\end{align}
We consider travelling wave, where the space-time dependence of the free surface $\eta$, of pressure $P$, and of the velocity fluid $(u,w)$ take the form $(x-ct)$ with $c<0$. $|c|$ represents westward propagation speed of the wave surface. Taking advantage of the $(x,t)$-dependence of the form $(x-ct)$ and passing to the moving frame, we can reformulate the governing equations:
\begin{equation}
\begin{cases}\label{2.8}
(u-c)u_{x} + w u_{z} + 2\omega w = - P_{x}~~\text{for} ~~ -d< z < \eta(x),\\
(u-c)w_{x} + w w_{z} - 2\omega u = - P_{z} -g~~\text{for}~~-d< z < \eta(x),\\
u_x+w_z=0\quad\text{for }~~-d< z < \eta(x)),\\
u_z-w_x=0\quad \text{for }~~-d< z < \eta(x),\\
w=(u-c)\eta_x\quad \text{for }z=\eta(x),\\
P=P_{atm}\quad \text{for }z=\eta(x),\\
w=0\quad \text{for }z=-d,
\end{cases}
\end{equation}
where $\rho$ is taken to be 1.

The equatorial Stokes wave is a smooth travelling wave solution to the governing equations $\eqref{2.8}$ for which there exists a period $L$ such that the free surface $\eta$ and the velocity field $(u,w)$ have period $L$ in the $x$-variable. $\eta$, $u$ and $P$ are symmetric about the wave crest. Moreover, the wave free surface is strictly monotonic between successive wave crests and wave troughs.

Without loss of generality, we assume the wave crest to be located at $x=0$, then $x=\pm L/2$ is trough lines and $\eta(x)=\eta(-x)$ for all $x\in\mathbb R$ with $\eta$ increasing on $[-L/2,0]$ and decreasing on $(0,L/2]$ (See Figure \ref{f1}).

\subsection{Stream function, velocity potential and  hodograph transform}

We introduce the \emph{stream function} $\psi(x,z)$, defined up to an additive constant by
\begin{align}\label{2.9}
\psi_z=u-c,\qquad \psi_x=-w,
\end{align}
then $\eqref{2.8}$ can be rewritten as
\begin{equation}\label{2.10}
\begin{cases}
\psi_z \psi_{xz} - \psi_x \psi_{zz} -2\omega \psi_x = - P_{x} \quad \text{for }~~-d< z < \eta(x),\\
-\psi_z \psi_{xx} + \psi_x \psi_{xz} - 2\omega (\psi_z+c) = - P_{z} -g\quad \text{for }~~-d< z < \eta(x),\\
u_x+w_z=0\quad\text{for }~~-d< z < \eta(x),\\
u_z-w_x=0\quad \text{for }~~-d< z < \eta(x),\\
\psi=0\quad \text{for }z=\eta(x),\\
\psi=m\quad \text{for }z=-d,\\
P=P_{atm}\quad \text{for }z=\eta(x),\\
u>0\quad \text{for }(x,z)\in{D},
\end{cases}
\end{equation}
with
\begin{equation*}
D=\{(x,z)\in{\mathbb R}:-d\leq z \leq \eta(x)\},
\end{equation*}
that is the mass flux passing to the uniform horizontal flow at speed $c$, which is given by \cite{buhler2014waves}\newcommand{\ud}{\mathrm{d}}
\begin{align}
m=\int_{-d}^{\eta(x)}(c-u)\, \ud z.
\end{align}
From $\eqref{2.10}$, we can infer that $\psi$ is harmonic throughout the fluid domain $D$. Applying the Maximum Principle \cite{stein2010complex}, $\psi$ reaches its maximum and minimum at the boundary. In accordance with $\eqref{2.7}$-$\eqref{2.9}$, it ensures that $\psi$ has maximum on $z=\eta(x)$ and minimum on $z=-d$.
For $z\in [-d,\eta(L/2)]$, we have
\begin{align}
\psi(\frac{L}{2},z)-\psi(-\frac{L}{2},z)=\int_{-L/2}^{L/2} \psi_x\, \ud x=-\int_{-L/2}^{L/2}w\, \ud x.\nonumber
\end{align}
Since $u$ is symmetric about the wave profile, we can deduce
\begin{align*}
\frac{\partial}{\partial z}\left[-\int_{-L/2}^{L/2}w\right]=\int_{-L/2}^{L/2} u_x\, \ud x
=u(\frac{L}{2},z)-u(-\frac{L}{2},z)=0
\end{align*}
and
\begin{align*}
\int_{-L/2}^{L/2}w(x,-d)\, \ud x=0.
\end{align*}
We now show  that stream function $\psi$ inherits the symmetry and periodicity of $u$ and $w$ in the $x$-variable, that is
\begin{align*}
\psi(x,z)=\psi(-x,z),\quad \psi(x+L,z)=\psi(x,z),
\end{align*}
where $(x,z)\in D$.
The function $\alpha(x,z)=\psi(x+L,z)-\psi(x,z)$ is harmonic in the interior of $ D$.Since $\psi$ is equal to zero on the free surface $z=\eta(x)$, then $\alpha=0$ on $z=\eta(x)$. Moreover, $\alpha=0$ on $z=-d$. By the Maximum Principle, $\alpha=0$ on $D$.

Note that the function $\beta(x,z)=\psi(x,z)-\psi(-x,z)$ is harmonic in the interior of $ D$. Due to the periodicity of $\psi$, we restrict our analysis to a periodicity box
\begin{align*}
D_L=\{(x,z):-L/2 \leq x \leq L/2,-d\leq z\leq \eta(x)\}
\end{align*}
and it also follows the maximum of $|\beta|$ cannot be attained on the lateral sides of $D$ unless $\beta$ is constant. Applying the Lagrange Mean-Value theorem to $\beta$, that is
\begin{align*}
\beta(x,z)=2x\psi_x(\xi_{x,z},z)=-2xw(\xi_{x,z},z),\quad \xi_{x,z} \in(-x,x).
\end{align*}
If $z=-d$, then $w=0$, and $\beta=0$. $\eqref{2.7}$-$\eqref{2.8}$ together with the monotonicity of $\eta$, we deduce that
\begin{align}\label{2.12}
w(x,\cdot)\geq0~\text{for  }~ x\in [-L/2,0] ~~\text{and}~~ w(x,\cdot) \leq 0~\text{for  }~x\in [0,L/2].
\end{align}
Therefore, $\beta \geq 0$ for all $x\in [-L/2,L/2]$.
Moreover, due to the smoothness of $\psi$, we have
\begin{align*}
\lim_{x \rightarrow L/2} \beta(x,z)=\lim_{x \rightarrow L/2} (\psi(x,z)-\psi(-x,z))=0,
\end{align*}
which implies $\beta(L/2,z)=0$. Due to the symmetry of $\eta$ and $u$ about the wave profile,
we have
\begin{align*}
w(x,\eta(x))=(u(x,\eta(x))-c)\eta_x(x)=-(u(-x,\eta(x))-c)\eta_x(-x)=-w(-x,\eta(x)).
\end{align*}
Therefore, we obtain
\begin{align*}
\frac{\partial}{\partial x}\beta(x,\eta(x))&=\psi_x(x,\eta(x))+\psi_x(-x,\eta(x))\\
&=-w(x,\eta(x))-w(-x,\eta(x))\\
&=0,
\end{align*}
then $\beta=0$ on $z=\eta(x)$. Taking advantage of the Maximum Principle, we obtain $\beta=0$ throughout the fluid domain ${D}$.
For the symmetry of $u$, $w$ and $\psi$, that is
\begin{equation}\label{P}
\begin{cases}
u(x,z)=u(-x,z),\\
w(x,z)=-w(-x,z),\\
\psi(x,z)=\psi(-x,z),
\end{cases}
\end{equation}
which shows that $w(0, \cdot)=0$ and since $w$ is periodic in $x$ variable, we have $w(\pm L/2, z)=0$.

Now we define the \emph{velocity potential} $\varphi(x, z)$ as the harmonic conjugate of $\psi(x,z)$, defined uniquely up to an additive constant by
\begin{align}\label{2.14}
\varphi_x=\psi_z,\quad\varphi_z=-\psi_x\,\quad \text { on } -d\leq z \leq \eta(x).
\end{align}
From $\eqref{2.7}$, $\eqref{2.9}$, $\eqref{2.12}$ and $\eqref{2.14}$, we can deduce that $\varphi$ is constant and equal to its minimum value $\varphi_{\min}$ on $x=-L/2$, and constant equal to its maximum value $\varphi_{max}$ on $x=L/2$. The fact that the function $\psi$ attain its minimum at the flat bed $z=-d$, and attain its maximum at the free surface $z=\eta(x)$. Without loss of generality, we assume that $\varphi_{min}=\psi_{min}=0$, then
\begin{align*}
\varphi_{max}=\int_{-L/2}^{L/2}\varphi_x \,\ud x=\int_{-L/2}^{L/2}(u(x,z)-c)\,\ud x
\end{align*}
and
\begin{align*}
\psi_{max}=\int_{-d}^{\eta(x)}\psi_z\, \ud z &=-\int_{-d}^{\eta(x)}(c-u(x,z))\ud z\\
&=-m=|m|,
\end{align*}
where $m<0$.
The function $f=\varphi + i \psi$, called the \emph{hodograph transform}, is analytic in the interior of $D$. In light of $\eqref{2.2}$ and $\eqref{2.8}$, $f'=u-c+i(-w)$ is also analytic in the interior of $D$. $f$ is a biholomorphic map between the interior of the periodicity region $D_L$ and the closed rectangle
\begin{align*}
R_{0}=\{(q,p):0\leq q \leq \varphi_{max},0\leq p \leq|m|\},
\end{align*}
which extends to a homeomorphism between the closures of these domains. The hodograph transform:
\begin{equation}\label{2.15}
\begin{cases}
q=\varphi(x,z),\\
p=\psi(x,z),
\end{cases}
\end{equation}
which is a global diffeomorphism from the fluid domain $D$ to the rectangle domain $\mathcal R$ with
\begin{align*}
\mathcal R=\{(q,p)\in\mathbb{R}^2: 0 \leq p \leq |m|\}.
\end{align*}

\begin{figure}[H]
\centering
\begin{tikzpicture}
    \draw[->, dashed] (-3,0) -- (4,0) node[right] {$x$}; 
    \draw[->, dashed] (0,-2) -- (0,2) node[above] {$z$}; 

    \draw (-3,-2) -- (3,-2);

    \draw[domain=-3:3,smooth,variable=\x,black,samples=100] plot ({\x},{(exp(-abs(\x))*cos(deg(-abs(\x))))-0.2});

    \draw (-3,-2) --(-3,{(exp(-abs(-3))*cos(deg(-abs(-3))))-0.2});
    \draw (3,-2) -- (3,{(exp(-abs(3))*cos(deg(-abs(3))))-0.2});
    \node at (1.5,-0.4) {$z = \eta(x)$};
    \node at (1.5,-1.8) {$z=-d$};
    \node at (-3,-2.4) {$x=-\frac{L}{2}$};
    \node at (3,-2.4) {$x=\frac{L}{2}$};
    \node at (0,-2.4) {$x=0$};

    \draw[->] (3.5,-1) -- (5.7,-1);
    \draw (6,-0.3) rectangle (10,-2);
    \node at (6,-2.2) {$q=0$};
    \node at (10,-2.2) {$q=\varphi_{max}$};
    \node at (8,-0.5) {$p=|m|$};
    \node at (8,-1.8) {$p=0$};
    \node at (4.6,-0.8) {$q=\varphi(x,z)$};
    \node at (4.6,-1.3) {$p=\psi(x,z)$};
\end{tikzpicture}
\caption{The conformal hodograph transform $f$ maps the fluid domain in the moving frame into a rectangle domain in the $\left(q, p\right)$-plane.}\label{f1}
\end{figure}
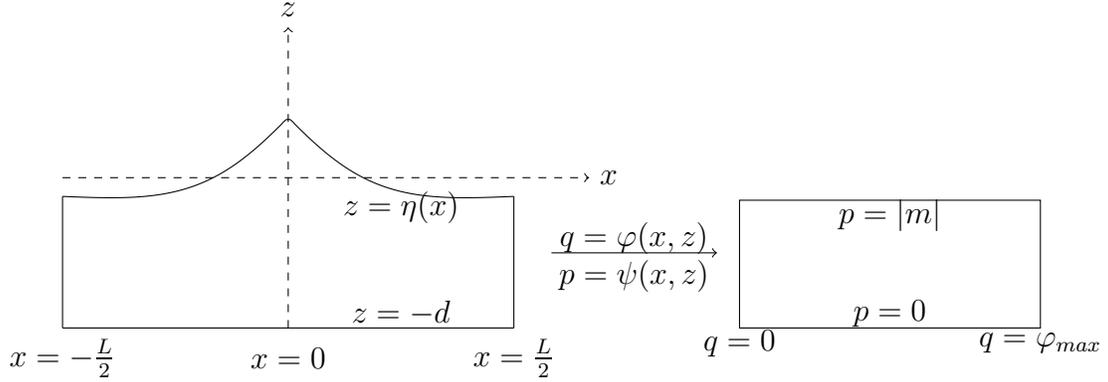

We can observe that the conformal bijection $q+ip\longrightarrow x+iz$ turns the rectangle domain into the fluid domain passing to the moving frame, and there is an inverse transform
\begin{equation}
\begin{cases}
x=x(q,p),\\
z=z(q,p),
\end{cases}
\end{equation}
which has the ``periodicity'' properties(see Figure \ref{f1})
\begin{equation}\label{2.17}
\begin{cases}
x(q+\varphi_{max},p)=x(q,p)+L\quad \text{for} \quad(p,q)\in \mathcal R,\\
z(q+\varphi_{max},p)=z(q,p)\quad \text{for} \quad (p,q)\in \mathcal R,\\
z(q,m)=-d\quad \text{for}\quad q\in\mathbb R.
\end{cases}
\end{equation}

\subsection{Outline of the paper}

The aim of this paper is to investigate kinetic energy and streamline properties  for irrotational equatorial wind waves.
Making use of  complex analysis approach, we derive several properties of streamlines and kinetic energy characteristics which include the streamline time-period, the length of streamlines, the area within the region between two streamlines, and the total energy within the region bounded by two streamlines. Moreover, we analyze the total energy over a streamline time-period, as well as the extremum of the kinetic energy. For example, we show that the kinetic energy over a streamline time-period decreases as the depth increases, and its logarithm is a concave function of the depth. The outline of this paper is as follows.


In Section \ref{sec3}, we investigate the integral means of the kinetic energy. An important step in our method is to notice that the integral of kinetic energy can be expressed as an integral over a horizontal segment of the modulus of a appropriately defined analytic function,  which allows us to apply the Hardy's convexity  theorem. Therefore, we prove the convexity and logarithmic convexity properties of the integral of the kinetic energy, as well as its non-increasing nature in the rectangle domain. In Ref.\cite{constantin2022complexa}, kinetic energy index $s\in[1,+\infty)$. In this section, we extend kinetic energy index $s\in (-\infty,-\frac{1}{2}]\cup [\frac{1}{2},\infty)$, which is not only of great significance to investigate generalized kinetic energy distribution and marine energy development and utilization, but also can be used to predict changes in kinetic energy well by measuring velocities at the free surface and along the flat bed in physics. Furthermore, we show that kinetic energy passing to the moving frame within the whole fluid domain is equal to half of area of the rectangle domain.

In Section \ref{sec4}, we derive some qualitative results about the streamline time-period of a fluid particle that refers to the time required to repeat the same trajectory.  We derive that the streamline time-period is independent of initial data by ideas in Ref.\cite{constantin2022complex}. We extend Ref.\cite{constantin2022complex} from being independent of initial data to being independent of any point on the streamline and any moment, which only depends on the streamline. By means of the Euler-Lagrange method, we prove the convexity and logarithmic convexity properties of the time-period of a fluid particle, as well as its non-increasing nature in the rectangle domain. Moreover, we estimate an upper bound of the minimum of time-period.

In Section \ref{sec5}, we obtain some properties of total kinetic energy and total kinetic energy passing to the moving frame for any fluid particle over a time-period. We derive that the total kinetic energy over a streamline time-period is independent of any point on the streamline, then we prove the convexity and logarithmic convexity properties of total kinetic energy over a time-period, as well as its non-increasing nature in the rectangle domain, and total kinetic energy passing to the moving frame is equal to a constant. Moreover, we provide a better understanding of the kinetic energy for wind waves.

In Section \ref{sec6}, we discuss some properties about the kinetic energy and area within the domain between any streamline and the free surface. We show that the total kinetic energy within the domain between any streamline and the free surface is concave and strictly non-decreasing in the rectangle domain, and total kinetic energy passing to the moving frame is strictly non-decreasing, which generalize the results in Ref.\cite{constantin2023complex}. We find that there exists a difference that total kinetic energy passing to the moving frame becomes concave and strictly non-decreasing in certain situation. Moreover, we obtain   area within the domain between any streamline and the free surface is  concave and strictly non-decreasing, and  estimate an upper bound of the maximum of area.

In Section \ref{sec7}, we present some properties about the length of any streamline and extremum of the kinetic energy passing to the moving frame. We prove that the length of any streamline is convex and non-increasing in the rectangle domain, where we estimate an upper bound of the minimum of streamline length. Motivated by the argument in Ref.\cite{constantin2022complexa}, applying the Bernoulli's law for gravity water waves \cite{constantin2011nonlinear}, we find that the extremum of the kinetic energy passing to the moving frame is attained on the free surface. More precisely, the kinetic energy passing to the moving frame attains its minimum at the wave troughs, the maximum of which is attained at the wave crests.

\section{ Integral means of the kinetic energy }\label{sec3}
In this section, we investigate  the convexity of integral means of kinetic energy $E=((u-c)^2 +w^2)/2$ at different depth levels. Since $f(x,z)=(q,p)$, we can use the hodograph transform $f$ to turn $E(x,y)$ in the fluid domain into $E(f^{-1}(q,p))$ in the closed rectangle domain.  Define
\begin{align}\label{3.1}
\mathcal{M}_s(E,p) = \frac{1}{\varphi_{\max}} \int_0^{\varphi_{max}} E(f^{-1}(q,p))^s\,\ud q,
\end{align}
where $s\in(-\infty,-\frac{1}{2}]\cup [\frac{1}{2},+\infty)$.
\begin{theorem}\label{Theorem3.1}
Suppose $p\in[0,\left|m\right|]$ and $s\in(-\infty,-\frac{1}{2}]\cup [\frac{1}{2},+\infty)$. The function $\mathcal{M}_s(E,p)$ is non-increasing and convex, and $\log\mathcal{M}_s(E,p)$ is convex.
\end{theorem}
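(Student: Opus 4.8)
\section*{Proof proposal}

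The plan is to reduce the claims to a single application of Hardy's convexity theorem for analytic functions on a strip (or, equivalently, on the rectangle $R_0$ after the obvious identification). The starting observation, already indicated in the paper's outline, is that the kinetic energy can be written as the squared modulus of the analytic function $f' = (u-c) - iw$: indeed $E = \tfrac12|f'|^2$ on the fluid domain, and under the hodograph transform $E(f^{-1}(q,p)) = \tfrac12 |f'(f^{-1}(q,p))|^2 = \tfrac12 \, |F(q+ip)|^{-2}$, where $F := (f^{-1})'$ is the derivative of the conformal inverse $q+ip \mapsto x+iz$; equivalently one works directly with $g(q+ip) := f'(f^{-1}(q+ip))$, which is analytic and non-vanishing on the interior of $R_0$ by the no-stagnation condition \eqref{2.7} (so $u - c > 0$ there). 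Hence
\begin{equation*}
\mathcal{M}_s(E,p) = \frac{1}{\varphi_{\max}} \int_0^{\varphi_{\max}} \left(\tfrac12\right)^{s} |g(q+ip)|^{2s}\, \ud q .
\end{equation*}

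Next I would set $h := g^{2s}$. The key point is that $h$ is a well-defined single-valued analytic function on the interior of $R_0$: since $g$ is analytic and nowhere zero on the simply connected interior of $R_0$, it admits an analytic logarithm, so $g^{2s} = \exp(2s \log g)$ makes sense for any real exponent $2s$ — this is exactly why the restriction $|s| \ge \tfrac12$ is not needed for mere well-definedness, though it will matter for the non-increasing conclusion. By the periodicity properties \eqref{2.17}, $g$ (hence $h$) is $\varphi_{\max}$-periodic in $q$, so it descends to an analytic function on the cylinder, and extends continuously to the closed rectangle. Then $\mathcal{M}_s(E,p) = (1/2)^s \, \mathcal{M}_1(|h|, p)$ up to the normalization, i.e. it is, up to the constant $(1/2)^s/\varphi_{\max}$, the $L^1$-mean of $|h|$ over the horizontal segment at height $p$. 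Hardy's convexity theorem (in the strip/cylinder formulation; see the references the paper cites for the gravity-wave analogue) now gives immediately that $p \mapsto \log \mathcal{M}_1(|h|,p)$ is convex on $[0,|m|]$, and hence that $\mathcal{M}_1(|h|,p)$ itself is convex (log-convex $\Rightarrow$ convex); multiplying by the positive constant $(1/2)^s$ preserves both convexity statements, yielding the convexity of $\mathcal{M}_s(E,p)$ and of $\log \mathcal{M}_s(E,p)$.

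It remains to establish monotonicity: $\mathcal{M}_s(E,p)$ is non-increasing in $p$. Here is where $|s| \ge \tfrac12$ enters, and this is the step I expect to be the main obstacle. A convex function on $[0,|m|]$ is non-increasing iff its value at the right endpoint $p = |m|$ is a minimum, or more usefully iff its right-derivative is $\le 0$ at $p=0$ (or one can instead show the derivative is $\le 0$ at $p = |m|$ and invoke convexity to push it leftward). At $p = |m|$ the segment is the flat bed $z = -d$, where $w = 0$, so $E = \tfrac12(u-c)^2$ there; differentiating $\mathcal{M}_s$ in $p$ and using the Cauchy–Riemann equations for $g$ together with the boundary behaviour on $z=-d$ (where, by \eqref{2.17}, $z(q,|m|)\equiv -d$ forces the tangential structure that makes $\partial_p |g|^{2s}$ controllable) should produce a sign-definite boundary term; the exponent constraint $|s|\ge\tfrac12$ is what guarantees the relevant subharmonicity/sign condition on $|g|^{2s}$, analogous to the classical fact that $|g|^{t}$ is subharmonic for $t$ in the appropriate range when $g$ is analytic and non-vanishing. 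Concretely I would compute $\tfrac{d}{dp}\mathcal{M}_s(E,p) = \tfrac{2s}{\varphi_{\max}}(1/2)^s \int_0^{\varphi_{\max}} |g|^{2s}\,\partial_p(\log|g|)\, \ud q$, rewrite $\partial_p \log|g| = -\partial_q(\arg g)$ via Cauchy–Riemann, integrate by parts in $q$ using $\varphi_{\max}$-periodicity to kill the boundary terms, and reduce to an integral of $|g|^{2s}\,\partial_q(\log|g|)\,\partial_q(\arg g)$-type quantities whose sign is controlled on the two horizontal boundaries; evaluating at the bed and at the surface and invoking convexity to interpolate should close the argument. The delicate bookkeeping of which boundary term survives and why its sign is the right one — and the precise role of the threshold $\tfrac12$ — is the part that needs care; everything else is a routine transcription of the gravity-wave argument to the present equatorial setting, since the Coriolis terms affect only the pressure equations and not the analyticity of $f'$ or the harmonicity of $\psi$ exploited above.
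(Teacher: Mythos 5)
Your treatment of convexity and log-convexity is sound and is essentially the paper's route: after the hodograph transform you are applying Hardy's convexity theorem on the cylinder/annulus to the means of $|g|^{2s}$. Your device of passing to the single-valued analytic branch $h=g^{2s}=\exp(2s\log g)$ (legitimate because $u-c>0$ keeps $g$ zero-free and in the right half-plane, so the logarithm is periodic) is a clean way to handle both signs of $s$ at once, where the paper instead splits into the cases $s\geq \tfrac12$ (working with $f'\circ f^{-1}$) and $s\leq-\tfrac12$ (working with $(f^{-1})'$).

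The monotonicity step, however, is a genuine gap, and you have correctly sensed that it is the crux. Two points. First, your main clause ``a convex function is non-increasing iff its right-derivative is $\le 0$ at $p=0$'' is false; only your parenthetical version is usable: since the derivative of a convex function is non-decreasing, it suffices to show $\tfrac{\ud}{\ud p}\mathcal M_s(E,p)\le 0$ at the \emph{bed} endpoint $p=|m|$. Second, and more importantly, your plan for producing that endpoint sign (integrate by parts in $q$, hunt for a sign-definite boundary term, attribute the threshold $|s|\ge\tfrac12$ to subharmonicity of $|g|^{2s}$) does not identify the actual mechanism and would not close as written: for zero-free $g$ the function $|g|^{2s}$ is subharmonic for \emph{every} real $s$, so no sign condition comes from there, and Hardy's theorem on an annulus gives convexity of the means but \emph{not} their monotonicity (unlike the disk case), so no amount of interpolation from convexity alone can supply the missing sign. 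The input you need is the kinematic bed condition $w=0$ on $z=-d$: since $\Im f'=-w$ vanishes on the horizontal line corresponding to the bed, the Schwarz reflection principle extends $g$ analytically across that line with $|g|$ \emph{even} in the distance to the bed level; hence $p\mapsto\mathcal M_s(E,p)$ extends to an even convex function about the bed level, its derivative vanishes there, and convexity then forces the derivative to be $\le 0$ throughout $[0,|m|]$. This reflection/evenness argument (equivalently, the computation showing $\partial x/\partial p=0$ on the bed because $w=0$ there) is exactly what the paper uses, and without it the non-increasing claim remains unproven in your proposal.
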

\begin{proof}
When $s\in [\frac{1}{2}, +\infty)$, suppose that $R_0=[0,\varphi_{max}]\times[0,|m|]$ and $k=2\pi/\varphi_{max}$.  The function $f=q+ip=\varphi+i\psi$ is analytic and continuous in the interior of $D$ by relations of $\eqref{2.2}$ and $\eqref{2.6}$, which maps the fluid field $D$ onto the closed rectangle $\mathcal R$. $f'=\varphi_x+i\psi_x=u-c+i(-w)$ is analytic, and $|f'(f^{-1}(q,p))|^2=2E(f^{-1}(q,p))$. The periodicity of $f'$ facilitates us the shifting of our research focus from the fluid field $D_L$ to the closed rectangle region $R_0$. Considering the conformal diffeomorphism:
$q+ip \mapsto \mathrm{e}^{ik(q+ip)},$
which maps the periodic rectangle box $R_0$ onto an annulus
\begin{align*}
\mathcal{S} = \left\{\xi \in \mathbb{C}: \mathrm{e}^{km} \leq |\xi| \leq 1\right\}.
\end{align*}
\begin{figure}[H]
\centering
\begin{tikzpicture}
    \draw[line width=2pt, red] (-10,-2) -- (-6,-2); 
    \draw[line width=2pt, blue] (-10,2) -- (-6,2); 
    \draw (-10,-2) -- (-10,2); 
    \draw (-6,-2) -- (-6,2); 
    \draw[line width=2pt, brown] (-10,0) -- (-6,0);
    \node at (-10,-2.2) {$q=0$};
    \node at (-6,-2.2) {$q=\varphi_{\max}$};
    \node at (-8, 1.8) {$p=|m|$};
    \node at (-8,-1.8) {$p=0$};
    \node at (-8,0.2) {$p=p_0$};
    \node at (-5.5,-3) {$q+ip$};
    \node at (-3,-2.93) {$\mathrm{e}^{ik(q+ip)}$};
    \draw[green,ultra thick,dashed] (-2,0) -- (-1,0);
    \draw [blue,ultra thick] (0,0) circle (1);
    \draw [red, ultra thick](0,0) circle (2);
    \draw [brown, ultra thick](0,0) circle (1.5);
    \draw[<->,blue,ultra thick,dashed] (-8,2) to[out=0,in=135] (0.8,0.6);
    \draw[<->,brown,ultra thick,dashed] (-8,0) to[out=0,in=225] (-1.5,0);
    \draw[<->,red,ultra thick,dashed] (-8,-2) to[out=0,in=225] (1.732,-1);
    \draw[|->] (-4.8,-3) -- (-3.7,-3);

\end{tikzpicture}

\caption{The conformal diffeomorphism maps the rectangle domain onto the annulus.}
\end{figure}
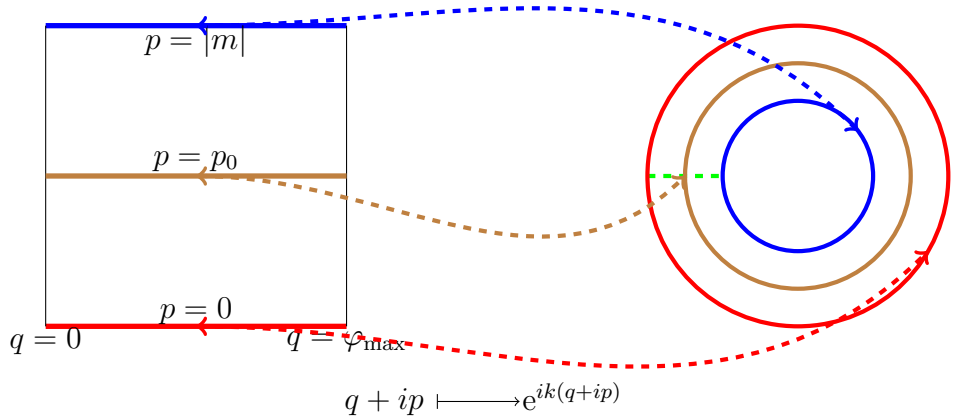

Note that there exists an inverse map $\xi\mapsto-\frac{i}{k}\log \xi$ from $S$ to $\mathcal R$. Consider that the map $F$: $\mathcal S \rightarrow \mathbb {C}$, given by
\begin{align*}
F(\xi)=(f'\circ f^{-1})(-\frac{i}{k}\log \xi),
\end{align*}
which can extend to a smooth function on $\mathcal{S}$. Applying the Morera's theorem, and the analyticity of $f$ and $f'$, the function $F(\xi)$ is  analytic in the interior of $\mathcal S$.
Let $F\in H^{2s}(\mathcal S)$, where $H^{2s}(\mathcal S)$ denotes the Hardy space, then we have
\begin{align*}
M_{2s}(F,r) &= \frac{1}{2\pi}\int_0^{2\pi} |F(r\mathrm{e}^{i\theta})|^{2s} \,\mathrm{d}\theta  \\
&= \frac{1}{2\pi}\int_0^{2\pi} |(f'\circ f^{-1})\left(-\frac{i}{k}\log(r\mathrm{e}^{i\theta})\right)|^{2s} \,\ud \theta  \\
&= \frac{1}{2\pi}\int_0^{2\pi} \left|(f'\circ f^{-1})\left(\frac{\theta}{k}-i\frac{\log r}{k}\right)\right|^{2s} \,\ud\theta  \\
&= \frac{1}{2\pi}\int_0^{2\pi} \left|(f'\circ f^{-1})\left(\frac{\theta}{k},-\frac{\log r}{k}\right)\right|^{2s} \,\ud \theta.
\end{align*}
Let $\gamma=\frac{\theta}{k}$, we have
\begin{align*}
M_{2s}(F,r)&=\frac{k}{2\pi}\int_0^{\frac{2\pi}{k}} \left|f'\left(f^{-1}(\gamma,-\frac{\log r}{k})\right)\right|^{2s} \,\ud \gamma\\
&=\frac{2^s}{\varphi_{max}}\int_0^{\varphi_{max}}\left |E\left(f^{-1}(\gamma,-\frac{\log r}{k})\right)\right|^s \,\ud \gamma\\
&=2^s\mathcal M_s(E,-\frac{\log r}{k}),
\end{align*}
which implies
\begin{align}\label{3.2}
\mathcal M_s(E,p)=\frac{1}{2^s} M_{2s}(F,\mathrm{e}^{-kp}),
\end{align}
and
\begin{equation}\label{NA}
\log \mathcal M_s(E,p)=\log M_{2s}(G,\mathrm{e}^{-kp})-s\log2.
\end{equation}
Applying the Hardy's convexity theorem for analytic function in an annulus \cite{sarason1965h}, for $2s\geq1$ and
$r\in\left[\mathrm{e}^{km},1\right]$, $M_{2s}(F,r)$ is a non-decreasing function of $r$, and $\log M_{2s}(F,r)$ is a convex function of $\log r$, i.e. for any $r_1$, $r_2$ and every $\alpha \in [0,1]$, we have
\begin{align*}
M_{2s}(F,r) \leq [M_{2s}(F,r_1)]^{\alpha} [M_{2s}(F,r_2)]^{1-\alpha},
\end{align*}
where $\log r =\alpha\log r_1 +(1-\alpha)\log r_2$. By $\eqref{NA}$, the function $\log\mathcal M_s(E,p)$ is convex. We let
\begin{align}
\delta(r)=\ln M_{2s}(F,r)\Rightarrow M_{2s}(F,r)=\mathrm{e}^{\delta(r)}.
\end{align}
Since $\delta(r)$ is a convex function, we have
\begin{align}\label{3.4}
\frac{\ud^2}{\ud r^2}M_{2s}(F,r)=\mathrm{e}^{\delta(r)}(\delta'^{2}(r)+\delta''(r))\geq 0.
\end{align}
Therefore, the convexity of the map $r \mapsto \log M_{2s}(F,r)$ naturally implies the convexity of the map $r \mapsto M_{2s}(F,r)$. Since the map $r\mapsto M_{2s}(F,r)$ is non-decreasing, and the map $p\mapsto\mathrm{e}^{-kp}$ is non-increasing, then the map $p\mapsto M_{2s}(F,\mathrm{e}^{-kp})$ is a non-increasing function. Since $F$ is analytic on the annulus $\mathcal S$, then
\begin{equation*}
M_{2s}(F,r)=\sum_{-\infty}^{\infty}|c_n|^{2s}r^{2sn}, \quad r\in[\mathrm{e}^{km},1],
\end{equation*}
where $F(\xi)=\sum_{-\infty}^{\infty}c_n \xi^n$.
Setting $r=\mathrm{e}^{-kp}$, we obtain
\begin{equation*}
M_{2s}(F,\mathrm{e}^{-kp})=\sum_{-\infty}^{\infty}|c_n|^{2s}\mathrm{e}^{-2skpn}, \quad p\in[0,|m|].
\end{equation*}
For $p\in[0,|m|]$, we have
\begin{align*}
\frac{\ud^2}{\ud p^2}M_{2s}(F,\mathrm{e}^{-kp})=4s^2k^2\sum_{-\infty}^{\infty}n^2|c_n|^{2s}\mathrm{e}^{-2skpn}\geq 0.
\end{align*}
Therefore, the convexity of $r \mapsto \ M_{2s}(F,r)$ implies the convexity of $p \mapsto M_{2s}(F,\mathrm{e}^{-kp})$.
In light of $\eqref{3.2}$, we can infer that the function $\mathcal{M}_{s}(E,p)$ is non-increasing and convex, for $ p\in[0,|m|]$ and any $s\in\left[\frac{1}{2},+\infty\right)$. In light of $\eqref{2.7}$, we have $f'\neq0$ throughout the flow.

When $s\in(-\infty,-\frac{1}{2}]$, then
\begin{align}
\mathcal{M}_s(E,p)&= \frac{1}{\varphi_{\max}} \int_0^{\varphi_{max}} \left( E(f^{-1}(q,p))\right)^s\,\ud q \nonumber \\
&=\frac{1}{\varphi_{\max}} \int_0^{\varphi_{max}} \left(\frac{1}{E(f^{-1}(q,p))}\right)^{-s}\,\ud q \nonumber \\
&=\frac{1}{2^s\varphi_{\max}} \int_0^{\varphi_{max}} \left(\frac{1}{(u(f^{-1}(q,p))-c)^2+(w(f^{-1}(q,p)))^2}\right)^{-2s}\,\ud q \nonumber\\
&=\frac{1}{2^s\varphi_{\max}} \int_0^{\varphi_{max}} \left|\frac{1}{f'(f^{-1}(q,p))}\right|^{-2s}\,\ud q \nonumber\\
&=\frac{1}{2^s\varphi_{\max}} \int_0^{\varphi_{max}} |(f^{-1})'(q,p))|^{-2s}\,\ud q.
\end{align}
Since $f$ is a bijection and $f\neq0$, the function $G(q,p)=(f^{-1})'(q,p)$
is analytic in the interior of the rectangle domain $\mathcal R$. Consider that the map: $\mathcal S\rightarrow \mathbb C$, given by
\begin{equation*}
\hat{G}(\xi)=G(-\frac{i}{k}\log\xi).
\end{equation*}
According to the Morera's theorem, $\hat{G}(\xi)$ is analytic on $\mathcal S$.
Furthermore, setting $\theta=kq$, then we have
\begin{align}\label{M}
\mathcal{M}_s(E,-\frac{\log r}{k})&=\frac{1}{2^s\varphi_{\max}} \int_0^{\varphi_{max}} |G(q,-\frac{\log r}{k}))|^{-2s}\,\ud q\nonumber\\
&=\frac{1}{2^sk\varphi_{\max}}\int_0^{2\pi}|G(\frac{\theta}{k},-\frac{\log r}{k})|^{-2s}\,\ud \theta\nonumber\\
&=\frac{1}{2^sk\varphi_{\max}}\int_0^{2\pi}|G(\frac{\theta}{k}-i\frac{\log r}{k})|^{-2s}\,\ud \theta\nonumber\\
&=\frac{1}{2^sk\varphi_{\max}}\int_0^{2\pi}|G(-\frac{i}{k}\log(r\mathrm{e}^{i\theta}))|^{-2s}\,\ud \theta\nonumber\\
&=\frac{1}{2^s}{M}_{-2s}(\hat{G},r).
\end{align}
Let $r=\mathrm{e}^{-kp}\in \mathcal S$, we find that
\begin{equation*}
\mathcal{M}_s(E,p)=\frac{1}{2^s}{M}_{-2s}(\hat{G},\mathrm{e}^{-kp}).
\end{equation*}
The proof of properties in $\eqref{M}$ is analogous to that in $\eqref{3.2}$. Therefore, applying the Hardy's convexity theorem, for $-2s\geq 1$, $\log {M}_{-2s}(\hat{G},p)$ is a convex function, and $\log\mathcal{M}_s(E,p)$ is convex. Similar to $\eqref{3.4}$, we infer that the function $\mathcal{M}_s(E,p)$ is convex, which means
\begin{align}\label{3.7}
\frac{\ud}{\ud \, p}\mathcal{M}_s(E,p)&=\frac{1}{2^s\varphi_{\max}} \int_0^{\varphi_{\max}} \frac{\ud}{\ud \,p}|G(q,p))|^{-2s}\,\ud q\nonumber\\
&=\frac{1}{2^s\varphi_{\max}} \int_0^{\varphi_{\max}} \frac{\ud}{\ud \,p}\left(\frac{1}{(u(f^{-1}(q,p)) - c)^2 + (w(f^{-1}(q,p)))^2 }\right)^{-2s} \,\ud q\nonumber\\.
\end{align}
is increasing for $p\in[0,|m|]$.
Let $f^{-1}(q,p)=(x,z)$, according to the chain rule, we obtain
\begin{align}
\frac{\partial}{\partial p}\{(u-c)^2+w^2\}=2(u-c)(u_x\frac{\partial x}{\partial p}+u_z\frac{\partial z}{\partial p})+2w(w_x\frac{\partial x}{\partial p}+w_z\frac{\partial z}{\partial p}).
\end{align}
It's easy to find that $\mathcal{M}_s(E,p)=\mathcal{M}_s(E,-p)$.
When $p=-|m|=m$ with $m<0$, by $\eqref{2.17}$, we have
\begin{align}
(x(q,m),z(q,m))=(x(q,m),-d)
\end{align}
and
\begin{align}
w=0\text{, } u_z(\cdot,-d)=w_x(\cdot,-d)\,\quad\text{on }z=-d.
\end{align}
Furthermore, computing the inverse of the Jacobian matrix of $f$, we have
\begin{align}\label{3.11}
\frac{\partial x}{\partial p}(q, m)=-\frac{w}{(u-c)^2+w^2}(x(q,m),-d)=0.
\end{align}
In light of $\eqref{3.7}$-$\eqref{3.11}$, for $p\in[m,0]$, we have
\begin{align}
\mathcal{M}_s'(E,p)\geq 0.
\end{align}
Since $\mathcal{M}_s(E,p)$ is an even function, for $-p\in[m,0]$, we have
\begin{align}
\mathcal{M}_s'(E,p)=-\mathcal{M}_s'(E,-p)\leq 0\quad \text{for } p\in[0, |m|],
\end{align}
where $m<0$. Therefore, the function $\mathcal{M}_s(E,p)$ is non-increasing for $p\in[0, |m|]$, which we complete the proof of the theorem.
\end{proof}

Theorem $\ref{Theorem3.1}$ provides a practical motivation for a better understanding of the structure of the kinetic energy of equatorial wind waves. Indeed, the velocity measurement in physics always is performed on the ocean level and the flat bed. Our consideration can be used to predict the variation of kinetic energy of the fluid particles in the fluid domain. Moreover, for equatorial wind waves, the convexity and logarithmic convexity properties of integral of kinetic energy provide theoretical foundations for numerical simulations of kinetic energy.

\begin{theorem}\label{Theorem3.2}
The kinetic energy within the fluid domain $D$, given by
\begin{align*}
E_0=\frac{u^2+w^2}{2},
\end{align*}
and the kinetic energy within the fluid domain $D$ passing to the moving frame, given by
\begin{align*}
E=\frac{(u-c)^2+w^2}{2},
\end{align*}
satisfy
\begin{align}
\iint_D E_0(x,z) \,\ud A(x,z)=\frac{1}{2}\iint_{\mathcal R }\frac{E_0}{E}(f^{-1}(q,p)) \,\ud A'(q,p)
\end{align}
and
\begin{align}
\iint_D E(x,z) \,\ud A(x,z)=\frac{1}{2}|\mathcal R|,
\end{align}
where
\begin{align*}
\mathcal R=\{(q,p): q\in \mathbb{R},0\leq p \leq|m|\}
\end{align*}
is a rectangle domain.
Furthermore, if we consider periodic domain $D_L$ and $R_0$, then we  obtain
\begin{align}
\iint_{D_{L}} E(x,z) \,\ud A(x,z)=\frac{1}{2}|R_0|=\frac{\varphi_{max}}{2}|m|.
\end{align}
\end{theorem}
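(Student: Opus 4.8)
The plan is to derive all three identities at once from a single change of variables applied to the conformal hodograph transform $f$, the point being that its Jacobian determinant is exactly $2E$. First I would recall the structure already in place: $f=\varphi+i\psi$ is analytic in the interior of the periodicity box $D_L$ and restricts to a biholomorphism of that interior onto the interior of the rectangle $R_0=[0,\varphi_{max}]\times[0,|m|]$, extending to a homeomorphism of the closures; moreover, by the no-stagnation hypothesis $\eqref{2.7}$ together with the Cauchy--Riemann relations $\eqref{2.14}$, the derivative $f'=\varphi_x+i\psi_x=(u-c)+i(-w)$ vanishes nowhere in $D$. Hence $f$ is a genuine $C^{1}$-diffeomorphism between the two open sets, and $E=\tfrac12|f'|^2=\tfrac12\big((u-c)^2+w^2\big)$ is continuous and strictly positive throughout $D$.

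Next I would compute the Jacobian of the real map $(x,z)\mapsto(q,p)=(\varphi(x,z),\psi(x,z))$. Using $\eqref{2.14}$, namely $\varphi_x=\psi_z=u-c$ and $\varphi_z=-\psi_x=w$, one finds
\[
\frac{\partial(q,p)}{\partial(x,z)}=\varphi_x\psi_z-\varphi_z\psi_x=(u-c)^2+w^2=|f'|^2=2E>0 .
\]
Therefore the change-of-variables formula gives, for every integrable $g$ on $R_0$ (respectively on $\mathcal R$),
\[
\iint_{R_0} g(q,p)\,\ud A'(q,p)=\iint_{D_L} g\big(f(x,z)\big)\,2E(x,z)\,\ud A(x,z),
\]
and the same identity with $R_0,D_L$ replaced by $\mathcal R,D$.

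It then remains to specialise $g$. Choosing $g\equiv1$ on $\mathcal R$ gives $|\mathcal R|=2\iint_D E\,\ud A$, which is the second identity; choosing $g\equiv1$ on $R_0$ and using $|R_0|=\varphi_{max}|m|$ gives the periodic identity $\iint_{D_L} E\,\ud A=\tfrac12|R_0|=\tfrac{\varphi_{max}}{2}|m|$. Choosing $g(q,p)=\tfrac{E_0}{E}(f^{-1}(q,p))$, so that $g(f(x,z))=\tfrac{E_0}{E}(x,z)$ and the weight $2E$ exactly cancels the denominator, yields $\iint_{\mathcal R}\tfrac{E_0}{E}(f^{-1})\,\ud A'=2\iint_D E_0\,\ud A$, the first identity.

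I do not expect a genuine obstacle; the only points requiring care are minor. The map $f$ is a diffeomorphism only on the interiors, but $\partial D_L$ and $\partial R_0$ are Lebesgue-null and so do not affect the integrals. For the first two identities both sides are formally infinite, since $D$ and $\mathcal R$ are $L$- (respectively $\varphi_{max}$-) periodic of infinite area; these are to be read as equalities of periodic densities, the precise statement being the third, periodic, identity. Finally, the integrand $E_0/E$ on the right of the first identity is locally integrable because $E$ is continuous and bounded below by a positive constant on the compact set $\overline{R_0}$, again because $f'\neq0$. The entire content of the theorem is thus the identification of the conformal Jacobian $|f'|^2$ with $2E$.
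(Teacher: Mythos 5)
Your proof is correct and takes essentially the same route as the paper: both arguments rest on the change of variables under the hodograph transform together with the identification of its Jacobian $\varphi_x\psi_z-\varphi_z\psi_x=|f'|^2=2E$. Your derivation of the second identity by specialising $g\equiv 1$ is in fact tidier than the paper's ``set $c=0$ so that $E_0=E$'' shortcut, and your caveat that the non-periodic identities equate infinite quantities (and are properly read per period) is a fair point the paper passes over.
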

\begin{proof}
The hodograph transform $f$: $\mathbb R\rightarrow \mathbb C$, given by $f=q+ip=\varphi+i\psi$. $f'=u-c+i(-w)$ is smooth, $|f'|^2=2E$ and $|f'+c|^2=2E_0$. Moreover, since $f(x,z)=(q,p)$, we have $(x,z)=f^{-1}(q,p)$. The Jacobian of $(x,z)$ is
\begin{align}\label{3.17}
|J|=
\begin{vmatrix}
\varphi_x & \varphi_z\\
\psi_x    & \psi_z\\
\end{vmatrix}
=
\begin{vmatrix}
u-c  & w\\
-w   & u-c\\
\end{vmatrix}
=|f'|^2.
\end{align}
Therefore, we have
\begin{align*}
\iint_D E_0(x,z) \,\ud A(x,z)&=\frac{1}{2}\iint_D |f'+c|^2 \,\ud A(x,z)\\
&=\frac{1}{2}\iint_{\mathcal R} |f'+c|^2 \frac{1}{|f'|^2}(f^{-1}(q,p)) \,\ud A'(q,p)\\
&=\frac{1}{2}\iint_{\mathcal R }\frac{E_0}{E}(f^{-1}(q,p)) \,\ud A'(q,p).
\end{align*}
If $c=0$, then $E_0=E$. Therefore, we have
\begin{align*}
\iint_D E(x,z) \,\ud A(x,z)=\frac{1}{2}|\mathcal R|.
\end{align*}
Considering the periodic rectangle domain
\begin{align*}
R_{0}=\{(q,p):0\leq q \leq \varphi_{max},0\leq p \leq|m|\},
\end{align*}
then we have
\begin{align*}
\iint_{D_{L}} E(x,z) \,\ud A(x,z)=|R_0|=\frac{\varphi_{max}}{2}|m|.
\end{align*}
\end{proof}
\begin{remark}
Theorem $\ref{Theorem3.2}$ ensures that passing to the moving frame, the kinetic energy within the fluid domain $D$ is equal to half of area of rectangle domain $\mathcal R$. It is of significance for studying energy transfer and distribution patterns in fluid dynamics.
\end{remark}

\section{The periodicity of the fluid particle's trajectory}\label{sec4}
In this section, we take advantage of the Euler-Lagrange method to derive some properties of flow. The equatorial Stokes wave is a smooth travelling wave solution to the governing equations $\eqref{2.8}$, which implies that the trajectory of any fluid particle is periodic. For the steady flow, the streamline coincides with the trajectory. The transformation $\eqref{2.15}$ shows that for any point $(x,z)$ in the fluid domain $D$ there is a unique point $(q,p)$ correspondence in the closed rectangle domain $\mathcal R$. $f=q+ip$ is a bijection, which means the points in two regions correspond one-to-one. So there is a smooth streamline $z=g_p(x)$ in the fluid domain $D$ only corresponding to a horizontal line $p$ in the rectangle domain $\mathcal R$. Due to the function $\psi$ composed of $x$ and $z$, we have
\begin{align}
\psi(x, g_p(x))=p.
\end{align}
Applying the implicit differentiation rule, from $\eqref{2.14}$ and $\eqref{2.9}$, we have
\begin{align}\label{4.2}
\psi_x+\psi_z g'_p(x)=0\Rightarrow g'_p(x)=-\frac{\psi_x}{\psi_z}=\frac{\varphi_z}{\varphi_x}=\frac{w}{u-c}.
\end{align}
In light of $\eqref{2.12}$, we obtain
\begin{align}
g'_p(x)\geq0~~\text{for}~x\in [-L/2,0] \quad \text{and} \quad g_p'(x) \leq 0~~\text{for}~x\in [0,L/2].
\end{align}
We apply the  Lagrange method to describe the motion of fluid particles. Therefore, any fluid particle follows the following differential systems:
\begin{equation}\label{4.4}
\begin{cases}
\frac{\ud\, x}{\ud\,t}=u(x,z,t)-c,\\
\frac{\ud\, z}{\ud\,t}=w(x,z,t).
\end{cases}
\end{equation}
For $t=t_0$, we have $(x(t_0),z(t_0))=(x_0,z_0)$. Obviously, if $(u,w)=(c,0)$, then it satisfies $\eqref{4.4}$. In this case, the water keep still passing to the moving the frame. From now on, we assume $(u,w)\neq(c,0)$. Due to the boundedness and smoothness of the velocity field, there is a unique global solution $(x_p(t; x_0,t_0), z_p(t; x_0,t_0))$ depending on initial data. In fact, we can observe that the trajectory of fluid particles is periodic in the closed rectangle field $\mathcal R$ after the hodograph transform of the fluid field(see Figure \ref{f1}).
For $(x,z)\in D$ and $L>0$, we have
\begin{align}
g_p(x)=g_p(x-L).
\end{align}
By $\eqref{2.17}$, we have
\begin{align*}
g_p(x(q,p))=g_p(x((q+T),p)-L),\quad x(q,p)=x((q+T),p)-L,
\end{align*}
where $T$ represents the time required to repeat the same trajectory for the same fluid particle in the closed rectangle $\mathcal R$.
Considering the moving frame, we obtain
\begin{align}
x(q,p)=x((q+T),p)-cT-L.
\end{align}
In fact, $T$ is independent of initial data $(t_0,x_0)$(see\cite{constantin2022complex}). In the next section, we will consider whether it is independent of any point $(q,p)$ on the streamline and any moment $t$.
\begin{theorem}\label{Theorem4.1}
Suppose $(q,p)\in[0,\varphi_{max}]\times[0,|m|]$ and $L>0$. Passing to the moving frame, the equation
\begin{align}
x_p(q+T, p)-cT=x_p(q,p)+L
\end{align}
has a unique solution $T(p)>0$ which only depends on the streamline and is independent of any point on the streamline.
Moreover, the function $T(p)$ is convex and non-increasing, and $\log T(p)$ is a convex function.
\end{theorem}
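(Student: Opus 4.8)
The plan is to reduce the whole statement to a single line integral along the streamline and then appeal to Theorem~\ref{Theorem3.1}. First I would parametrize the Lagrangian trajectory of a fluid particle lying on the streamline $\{\psi=p\}$ by the velocity potential $q=\varphi$ instead of by time $t$. Differentiating $q=\varphi\bigl(x_p(t),z_p(t)\bigr)$ along the flow \eqref{4.4} and using \eqref{2.9} and \eqref{2.14} gives
\begin{equation*}
\dot q=\varphi_x\,(u-c)+\varphi_z\,w=(u-c)^2+w^2=|f'|^2=2E>0,
\end{equation*}
where positivity comes from \eqref{2.7} together with the standing assumption $(u,w)\neq(c,0)$; on compact pieces of the domain $\dot q$ is bounded below by a positive constant, so $t\mapsto q(t)$ is a smooth strictly increasing bijection, and along the streamline $\ud t=\ud q/\bigl((u-c)^2+w^2\bigr)$.

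Next I would observe that, since the moving-frame velocity field is autonomous, the particle returns to the same moving-frame position up to a horizontal shift by one period $L$ exactly when its potential coordinate $q$ has increased by $\varphi_{max}$; this is the content of \eqref{2.17}, namely $x(q+\varphi_{max},p)=x(q,p)+L$ and $z(q+\varphi_{max},p)=z(q,p)$. Writing $x_{\mathrm{phys}}=x+ct$ for the position in the original frame, the horizontal displacement accumulated over the corresponding time span $T$ equals $L+cT$, which rearranges to the identity in the statement. Because $\dot q>0$, the defining relation $q(t_0+T)=q(t_0)+\varphi_{max}$ has a unique positive root $T$, settling existence and uniqueness. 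Integrating $\ud t=\ud q/((u-c)^2+w^2)$ over one such excursion, and using $\bigl|(f^{-1})'\bigr|^2=1/|f'|^2=1/(2E)$, yields the closed form
\begin{equation*}
T(p)=\int_{q_0}^{q_0+\varphi_{max}}\frac{\ud q}{(u-c)^2+w^2}=\int_0^{\varphi_{max}}\bigl|(f^{-1})'(q,p)\bigr|^2\,\ud q=\frac{\varphi_{max}}{2}\,\mathcal M_{-1}(E,p),
\end{equation*}
with the integrand evaluated at $f^{-1}(q,p)$. Since $u$ and $w$ are $L$-periodic in $x$ and $f^{-1}$ obeys \eqref{2.17}, the integrand is $\varphi_{max}$-periodic in $q$, so the integral is independent of the base point $q_0$; and because the moving-frame flow is time-independent, the period does not depend on the initial instant $t_0$. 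Hence $T$ depends on the streamline label $p$ alone, not on the chosen point or moment.

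For the qualitative properties I would invoke Section~\ref{sec3} at the index $s=-1$. Since $-1\in(-\infty,-\tfrac12]$, Theorem~\ref{Theorem3.1} gives that $\mathcal M_{-1}(E,p)$ is non-increasing and convex on $[0,|m|]$ and that $\log\mathcal M_{-1}(E,p)$ is convex; multiplying by the positive constant $\varphi_{max}/2$ preserves monotonicity and convexity, and $\log T(p)=\log(\varphi_{max}/2)+\log\mathcal M_{-1}(E,p)$ is convex, which gives all the assertions. Alternatively one can reprove the convexity directly in the spirit of Theorem~\ref{Theorem3.1}: $G=(f^{-1})'$ is analytic on the rectangle $\mathcal R$, $\hat G(\xi)=G\bigl(-\tfrac{i}{k}\log\xi\bigr)$ is analytic on the annulus $\mathcal S$, and $T(p)$ is a positive multiple of $M_2(\hat G,\mathrm e^{-kp})$, to which the Hardy convexity theorem for an annulus applies — presumably the Euler--Lagrange/variational route the outline alludes to.

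The step I expect to be the real obstacle is the middle one: justifying rigorously that the particle's period corresponds exactly to the $q$-increment $\varphi_{max}$, that the resulting integral is genuinely independent of the base point $q_0$ and of the time $t_0$, and that the moving-frame identity translates correctly into the stated $-cT$ form. Everything rests on the change of variable from $t$ to $q$ along a fixed streamline and on the periodicity encoded in \eqref{2.17}; once the closed-form integral is in hand, the monotonicity and convexity of $T(p)$ follow immediately from the already-established Theorem~\ref{Theorem3.1}.
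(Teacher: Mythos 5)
Your proposal is correct and follows essentially the same route as the paper: both establish existence and uniqueness of $T$ from the strict positivity of the relevant derivative, reduce $T(p)$ to the closed form $\frac{\varphi_{max}}{2}\,\mathcal M_{-1}(E,p)$, and then invoke Theorem~\ref{Theorem3.1} at $s=-1$. The only cosmetic difference is that you change variables from $t$ to $q$ in a single step via $\dot q=2E$ along the trajectory, whereas the paper first converts the time integral to $\int_{-L/2}^{L/2}\frac{\ud x}{u(x,g_p(x))-c}$ and then substitutes $q=\varphi(x,g_p(x))$.
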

\begin{proof}
Let
\begin{align*}
h_p(t)=x_p(q+t, p)-ct-x_p(q,p)-L,
\end{align*}
the stream function $\psi=p$ and the potential function $\varphi=q$ are independent of time $t$ which means their partial derivative of time is zero, then we have
\begin{align*}
h'_p(t)=u-c\geq\delta,
\end{align*}
where $\delta =\underset{(x,z)\in D}{\min} \{u(x,z)-c\} > 0$. Applying the Lagrange Mean-Value theorem, for some $\theta \in (0,t)$, we find that
\begin{align*}
h_p(t)-h_p(0)=h'_p(\theta)t\Rightarrow h_p(t)=h'_p(\theta)t+L\geq\delta t +L,
\end{align*}
which ensures the existence of $T\in\mathbb{R^+}$ with $h_p(T)=0$.
Let
\begin{align*}
\alpha_p(s)=x_p(q+s, p)-c(q+s),
\end{align*}
then we have
\begin{align*}
\alpha_p'(s)=u(\alpha_p(s),g_p(\alpha_p(s)))-c\text{,}\quad\alpha_p(0)=x(q,p)-cq
\end{align*}
and
\begin{align*}
\alpha_p(T)=x_p(q+T, p)-c(q+T)=x_p(q,p)-cq+L.
\end{align*}
For $s\in[0,T]$, we obtain
\begin{align*}
T=\int_0^T\frac{\alpha_p'(s)}{u(\alpha_p(s),g_p(\alpha_p(s)))-c}\,\ud s =\int_{\alpha_p(0)}^{\alpha_p(T)}\frac{1}{u(\alpha_p(s),g_p(\alpha_p(s)))-c}\,\ud \alpha_p(s).
\end{align*}
Furthermore, due to the periodicity of $u$, we have
\begin{align}
T=\int_{x_(q,p)-cq}^{x_p(q,p)-cq+L}\frac{1}{u(\alpha,g_p(\alpha))-c}\,\ud \alpha=\int_{-\frac{L}{2}}^{\frac{L}{2}}\frac{1}{u(x,g_p(x))-c}\,\ud x.
\end{align}
Note that $T$ only depends on the horizontal line $p$ in the closed rectangle domain $\mathcal R$. In other words, it only depends on the streamline $z=g_p(x)$ rather than depending on any point $(q,p)$ on the streamline and any moment $t$.

Next, $f=q+ip=\varphi+i\psi$ is analytic in the interior of fluid domain $D$, the derivative of $f$ is analytic, and
$f'=u-c+i(-w)$. By means of the hodograph transform $f$, we consider
\begin{align}
q=q_p(x)=\varphi(x, g_p(x)).
\end{align}
According to the implicit differentiation rule, we have
\begin{align}\label{4.10}
q'_p(x)=\varphi_x+ \varphi_z g_p'(x).
\end{align}
In light of $\eqref{2.9}$, $\eqref{2.14}$ and $\eqref{4.2}$, we obtain
\begin{align}\label{4.11}
q'(x)=\frac{\varphi_x^2+\varphi_z^2}{\varphi_x}=\frac{(u-c)^2+w^2}{u-c}.
\end{align}
Since $E=((u-c)^2 +w^2)/2$, we have
\begin{align}\label{4.12}
q'(x)=\frac{2E}{u(x,g_p(x))-c}.
\end{align}
For $x\in[-L/2,L/2]$, then
\begin{align}\label{4.13}
T(p)=\int_{-\frac{L}{2}}^{\frac{L}{2}}\frac{1}{u(x,g_p(x))-c}\,\ud x&=\int_{-\frac{L}{2}}^{\frac{L}{2}}\frac{q'(x)}{2E(x,g_p(x))}\,\ud x=\int_{0}^{\varphi_{max}}\frac{1}{2E(f^{-1}(q,p))}\,\ud q.
\end{align}
According to Theorem $\ref{Theorem3.1}$, when $s=-1$, we have
\begin{align*}
T(p)=\frac{\varphi_{max}}{2}\mathcal M_{-1}(E,p).
\end{align*}
Therefore, $T(p)$ is a convex and non-increasing function, and $\log T(p)$ is a convex function.
Theorem $\ref{Theorem4.1}$ shows that the time-period $T(p)$ of fluid particle's trajectory is only concerned with the properties of the streamline itself.
\end{proof}
\begin{theorem}\label{Remark4.1}
The time-period $T(p)$ of the fluid particle's trajectory attains its maximum at $p=0$, namely the free surface $z=\eta(x)$, the minimum of which is attained at $p=|m|$ and less than $\frac{L}{\delta}$, where $\delta=\underset{(x,z)\in D}\min \{u(x,z)-c\}$.
\end{theorem}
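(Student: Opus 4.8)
The plan is to read off the location of the extrema of $T$ directly from the monotonicity already established in Theorem \ref{Theorem4.1}, and then to extract the quantitative bound from the integral representation of $T(p)$ together with the no-stagnation hypothesis.

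First, by Theorem \ref{Theorem4.1} the function $T$ is non-increasing on $[0,|m|]$, so $T(0)\ge T(p)\ge T(|m|)$ for every $p\in[0,|m|]$. Hence $T$ attains its maximum at $p=0$ and its minimum at $p=|m|$. Since $\psi\equiv0$ on the free surface $z=\eta(x)$, the level $p=0$ corresponds to the streamline $z=\eta(x)$; likewise, by the normalization of the hodograph transform recorded in \eqref{2.17} (and by the evenness $T(p)=T(-p)$ used in the proof of Theorem \ref{Theorem3.1}), the level $p=|m|$ corresponds to the flat bed $z=-d$. This settles the qualitative part of the statement.

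Next, recall from the proof of Theorem \ref{Theorem4.1} the representation
\begin{align*}
T(p)=\int_{-L/2}^{L/2}\frac{1}{u(x,g_p(x))-c}\,\ud x .
\end{align*}
By the no-stagnation condition \eqref{2.7} one has $u(x,z)-c\ge\delta>0$ throughout $D$, with $\delta=\min_{(x,z)\in D}\{u(x,z)-c\}$, so the integrand is pointwise at most $1/\delta$. Integrating over $[-L/2,L/2]$ yields $T(p)\le L/\delta$ for every $p$, and in particular $\min_{p\in[0,|m|]}T(p)=T(|m|)\le L/\delta$.

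Finally, to upgrade this to the strict inequality claimed in the statement, I would show that the integrand is not identically $1/\delta$ along the bed. If it were, then $u(\cdot,-d)-c\equiv\delta$ on $[-L/2,L/2]$; combined with $w(\cdot,-d)\equiv0$, this means that the analytic function $f'=(u-c)+i(-w)$ is constant, equal to $\delta$, on the boundary segment $z=-d$. By the Schwarz reflection principle $f'$ extends analytically across $z=-d$, and then the identity theorem forces $f'\equiv\delta$ throughout $D$; hence $w\equiv0$, and the kinematic surface condition $w=(u-c)\eta_x$ on $z=\eta(x)$ gives $\delta\,\eta_x\equiv0$, i.e. $\eta$ constant, contradicting the strict monotonicity of $\eta$ between crest and trough. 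Therefore $u(\cdot,-d)-c>\delta$ on a set of positive measure, so
\begin{align*}
T(|m|)=\int_{-L/2}^{L/2}\frac{1}{u(x,-d)-c}\,\ud x<\int_{-L/2}^{L/2}\frac{1}{\delta}\,\ud x=\frac{L}{\delta}.
\end{align*}
The first two steps are routine; the genuine obstacle is this last strictness step, namely ruling out the borderline case $u-c\equiv\delta$ on the bed, which is precisely where the analyticity of $f'$ (via Schwarz reflection and unique continuation) and the fact that an equatorial Stokes wave has a non-flat free surface are needed.
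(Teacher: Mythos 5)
Your proposal is correct and follows essentially the same route as the paper: both deduce the location of the extrema of $T$ from the monotonicity established in Theorem \ref{Theorem4.1}, identify $p=0$ with the free surface and $p=|m|$ with the bed, and then bound $T(|m|)=\int_{-L/2}^{L/2}\frac{\ud x}{u(x,-d)-c}$ using $u-c\ge\delta$ from \eqref{2.7}. The one genuine difference is your final strictness step: the paper's own proof stops at $T(|m|)\le L/\delta$, even though the statement reads ``less than.'' Your argument closing this gap --- if $u(\cdot,-d)-c\equiv\delta$ on the bed then, since $w$ vanishes there, $f'=(u-c)-iw$ is the constant $\delta$ on a boundary segment, so Schwarz reflection and the identity theorem force $f'\equiv\delta$, hence $w\equiv 0$, and the kinematic condition $w=(u-c)\eta_x$ then flattens $\eta$, contradicting the strict monotonicity of the profile between crest and trough --- is sound and actually delivers the strict inequality the theorem asserts, which the paper's proof does not.
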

\begin{proof}
For $p\in[0,\varphi_{max}]\times[0,|m|]$, we rewrite $\eqref{4.13}$ as
\begin{align*}
T(p)=\int_{0}^{\varphi_{max}}\frac{1}{2E}(f^{-1}(q,p))\,\ud q.
\end{align*}
From $\eqref{2.10}$, we obtain $p=0$ on the free surface $z=\eta(x)$. According to Theorem $\ref{Theorem4.1}$, $T(p)$ is non-increasing for $p\in[0,|m|]$. Therefore, the maximum of $T(p)$ is attained at $p=0$, namely the free surface $z=\eta(x)$, and the minimum of $T(p)$ is attained at $p=|m|$. In light of $\eqref{2.8}$ and $\eqref{2.10}$, we have $w=0$ and $p=m$ for $z=-d$. Moreover, we find that
\begin{align*}
T(-p)=\int_{0}^{\varphi_{max}}\frac{1}{2E(f^{-1}(q,-p))}\,\ud q&=\int_{0}^{\varphi_{max}}\frac{1}{|f'(f^{-1}(q,-p))|^2}\,\ud q=T(p).
\end{align*}
Therefore, $T(p)$ is an even function.
Let $\delta=\underset{(x,z)\in D}\min \{u(x,z)-c\}$, then we have
\begin{align*}
T(|m|)=T(m)=\int_{0}^{\varphi_{max}}\frac{1}{2E}(f^{-1}(q,m))\,\ud q.
\end{align*}
Since $(x,z)=f^{-1}(q,p)$, by $\eqref{4.13}$, then we have
\begin{align*}
T(|m|)=T(m)=\int_{-\frac{L}{2}}^{\frac{L}{2}}\frac{1}{u(x,-d)-c}\,\ud x\leq\frac{L}{\delta}.
\end{align*}
\end{proof}
\begin{remark}
From Theorem $\ref{Theorem4.1}$, we can deduce that the fluid particle trajectory exhibits the same pattern, which undergoes a horizontal shift after each streamline time-period $T(p)$. More importantly, this pattern is only dependent on the streamline itself. Moreover, the fact that the pattern is repeated after an appropriate shift was established in \cite{O}. Theorem $\ref{Remark4.1}$ shows the location of the extreme points of the streamline time-period $T(p)$, and estimates an upper bound of the minimum of time-period $T(p)$, which reflects the nonlinear characteristics of the equatorial wind wave. Moreover, Theorem $\ref{Theorem4.1}$ and Theorem $\ref{Remark4.1}$ are advantageous for simulating the motion of fluid particles numerically in hydrodynamics.
\end{remark}

\section{The total kinetic energy of fluid particle}\label{sec5}
In this section, we discuss the variation of total kinetic energy over a time-period for any fluid particle.
The total kinetic energy of any fluid particle over a streamline time-period, given by
\begin{align}\label{eq:5.1}
\mathcal E(q, p)=\frac{1}{2}\int_0^{T(p)}[(x_p'(t;x(q,p)))^2+(z_p'(t;x(q,p)))^2]\,\ud t,
\end{align}
which is independent of initial data $(x_0,z_0)$.
The total kinetic energy of any fluid particle passing to the moving frame over a streamline time-period is given by
\begin{align}
\mathbb E(q,p)=\frac{1}{2}\int_0^{T(p)}[(x_p'(t;x(q,p))-c)^2+(z_p'(t;x(q,p)))^2]\,\ud t.
\end{align}
To begin with, we prove the monotonicity and convexity of a composite function.
\begin{lemma}\label{Lemma5.1}
Suppose for any $x\in\mathbb{R}$, the functions $\alpha(x)$ and $\beta(x)\geq 0$. Let $M(x)=\alpha(x)\cdot\beta(x)$. If $\alpha$ and $\beta$ are both convex and non-increasing function, then $M$ is a convex and non-increasing function.
\end{lemma}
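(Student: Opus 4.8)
The plan is to prove that the product $M = \alpha\beta$ of two convex non-increasing functions is itself convex and non-increasing, by direct manipulation of the defining inequalities rather than appealing to differentiability (since the lemma is stated for arbitrary functions on $\mathbb{R}$, not necessarily $C^2$). First I would dispatch monotonicity: for $x_1 \le x_2$ we have $0 \le \alpha(x_2) \le \alpha(x_1)$ (using that $\alpha$ is non-increasing, and implicitly nonnegative — indeed $\alpha \ge 0$ is needed for the product to behave well, and in all intended applications $\alpha = \varphi_{\max}/2$ or a similar positive constant times $\mathcal{M}_s$, so I would either assume $\alpha \ge 0$ or note it holds in context) and $0 \le \beta(x_2) \le \beta(x_1)$, hence $M(x_2) = \alpha(x_2)\beta(x_2) \le \alpha(x_1)\beta(x_1) = M(x_1)$, so $M$ is non-increasing.

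The substance is convexity. Fix $x_1 \le x_2$ and $\lambda \in [0,1]$, set $x_\lambda = \lambda x_1 + (1-\lambda) x_2$. Convexity of $\alpha$ and $\beta$ gives
\begin{align*}
\alpha(x_\lambda) &\le \lambda \alpha(x_1) + (1-\lambda)\alpha(x_2),\\
\beta(x_\lambda) &\le \lambda \beta(x_1) + (1-\lambda)\beta(x_2).
\end{align*}
Since both right-hand sides are nonnegative, I can multiply the two inequalities to get
\begin{align*}
M(x_\lambda) \le \bigl[\lambda \alpha(x_1) + (1-\lambda)\alpha(x_2)\bigr]\bigl[\lambda \beta(x_1) + (1-\lambda)\beta(x_2)\bigr].
\end{align*}
Expanding the right-hand side yields $\lambda^2 M(x_1) + (1-\lambda)^2 M(x_2) + \lambda(1-\lambda)\bigl[\alpha(x_1)\beta(x_2) + \alpha(x_2)\beta(x_1)\bigr]$. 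The target is $\lambda M(x_1) + (1-\lambda)M(x_2)$, so it suffices to show the cross term is dominated, i.e. that $\alpha(x_1)\beta(x_2) + \alpha(x_2)\beta(x_1) \le M(x_1) + M(x_2) = \alpha(x_1)\beta(x_1) + \alpha(x_2)\beta(x_2)$. This rearranges to $0 \le \bigl(\alpha(x_1) - \alpha(x_2)\bigr)\bigl(\beta(x_1) - \beta(x_2)\bigr)$, which holds because $\alpha$ and $\beta$ are both non-increasing, so the two factors have the same sign. Combining $\lambda^2 M(x_1) + (1-\lambda)^2 M(x_2) + \lambda(1-\lambda)\bigl(M(x_1)+M(x_2)\bigr) = \lambda M(x_1) + (1-\lambda)M(x_2)$ closes the estimate.

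The only genuine subtlety — and the step I would flag as the main obstacle — is that the naive "product of convex functions is convex" is false in general; convexity of the product genuinely uses both the monotonicity and the sign alignment of $\alpha$ and $\beta$, which is exactly where the hypothesis that \emph{both} are non-increasing (hence their increments are co-monotone) enters through the term $(\alpha(x_1)-\alpha(x_2))(\beta(x_1)-\beta(x_2)) \ge 0$. A secondary point worth stating cleanly is the nonnegativity needed to multiply the two convexity inequalities; I would make the hypothesis $\alpha \ge 0$ explicit (the paper already assumes $\beta \ge 0$), or observe that in every application $\alpha$ is a positive constant multiple of a nonnegative quantity. With those two observations the argument is a short, self-contained computation requiring no calculus.
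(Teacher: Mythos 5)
Your proof is correct, and it takes a genuinely different route from the paper's. The paper differentiates twice: it writes $M'=\alpha'\beta+\alpha\beta'$ and $M''=\alpha''\beta+2\alpha'\beta'+\alpha\beta''$ and reads off the signs term by term (note that this implicitly assumes $\alpha,\beta\in C^2$, and that the paper's displayed inequality $M'(x)\ge 0$ is a sign slip --- with $\alpha,\beta\ge 0$ non-increasing one gets $M'\le 0$, which is what the claimed monotonicity actually requires). You instead work directly from the definition of convexity: multiply the two convexity inequalities (legitimate because all quantities are nonnegative), expand, and reduce the problem to the co-monotonicity inequality $\bigl(\alpha(x_1)-\alpha(x_2)\bigr)\bigl(\beta(x_1)-\beta(x_2)\bigr)\ge 0$, which is exactly where the hypothesis that both functions are non-increasing enters. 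Your argument buys two things: it dispenses with any differentiability assumption, so the lemma holds for arbitrary convex non-increasing nonnegative functions, and it isolates cleanly why ``product of convex functions'' fails in general but succeeds here. You are also right to flag that nonnegativity of $\alpha$ (not just $\beta$) is needed --- the paper uses it silently in the term $\alpha\beta''$, and the lemma's phrasing ``$\alpha(x)$ and $\beta(x)\ge 0$'' should be read as asserting both; in every application in the paper $\alpha$ and $\beta$ are integral means of nonnegative quantities, so this is harmless but worth stating. The paper's calculus version is shorter where smoothness is available; yours is the more robust and self-contained statement.
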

\begin{proof}
Assume that $\alpha(x)$ and $\beta(x)\geq0$.
Since $\alpha(x)$ and $\beta(x)$ are both convex and non-increasing function on $\mathbb R$, we have
\begin{align*}
M'(x)=\alpha'(x)\beta(x)+\alpha(x)\beta'(x)\geq0,\\
\end{align*}
and
\begin{align*}
M''(x)=\alpha''(x)\beta(x)+2\alpha'(x)\beta'(x)+\alpha(x)\beta''(x)\geq0.\\
\end{align*}
Therefore, $M(x)$ is a convex and non-increasing function, which we complete the proof of the lemma.
\end{proof}
\begin{theorem}\label{Theorem5.1}
Suppose $t\in[0,T(p)]$ and $(q,p)\in[0,\varphi_{max}]\times[0,|m|]$. $\mathcal E(q,p)$ is a convex and non-increasing function, which only depends on the streamline $p=\psi$. Moreover, $\log \mathcal E(p)$ is a convex function. $\mathbb E(q,p)$ is equal to a constant $\varphi_{max}/2$.
\end{theorem}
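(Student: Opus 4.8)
The plan is to trade the time parameter $t$ along a fluid trajectory for the potential variable $q$; this collapses $\mathbb E$ to a constant at once and rewrites $\mathcal E$ as a period-average of the squared modulus of an analytic function, so that the annulus machinery of Theorem~\ref{Theorem3.1} applies.

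First I would record two elementary identities along the trajectory $t\mapsto(x_p(t),z_p(t))$ issuing at $t=0$ from $f^{-1}(q,p)$. Writing $\xi(t)=x_p(t)-ct$ for the moving-frame abscissa, one has $\dot\xi=u-c$ and $\dot z_p=w$, hence $x_p'^2+z_p'^2=u^2+w^2=2E_0$ and $(x_p'-c)^2+z_p'^2=(u-c)^2+w^2=2E$. Using $\varphi_x=u-c$ and $\varphi_z=w$ from \eqref{2.14}, the function $q(t)=\varphi(\xi(t),z_p(t))$ satisfies $\dot q=(u-c)^2+w^2=2E>0$ by \eqref{2.7}, so $\ud t=\ud q/(2E)$; moreover over one streamline time-period $T(p)$ — well defined and depending only on $p$ by Theorem~\ref{Theorem4.1} — the trajectory advances exactly one wavelength in $\xi$, so $q$ increases by exactly $\varphi_{max}$ by \eqref{2.17}. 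Substituting,
\begin{align*}
\mathbb E(q,p)=\int_0^{T(p)}E\,\ud t=\int_{q}^{q+\varphi_{max}}E\cdot\frac{1}{2E}\,\ud q'=\frac{\varphi_{max}}{2},
\end{align*}
which is independent of $q$ and $p$ and is the last assertion, while the same substitution yields
\begin{align*}
\mathcal E(q,p)=\int_0^{T(p)}E_0\,\ud t=\frac12\int_{q}^{q+\varphi_{max}}\frac{E_0}{E}\bigl(f^{-1}(q',p)\bigr)\,\ud q'.
\end{align*}
Since $E_0/E$ is $\varphi_{max}$-periodic in $q'$ (inherited from the $L$-periodicity of $u,w$ via \eqref{2.17}), the last integral is independent of $q$, so $\mathcal E=\mathcal E(p)$ depends only on the streamline.

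For the convexity statements about $\mathcal E(p)$ the key point is that $f'+c=(u-c)-iw+c=u-iw$ is analytic (it is $f'$ plus a constant; equivalently, its real and imaginary parts satisfy the Cauchy--Riemann equations by mass conservation and irrotationality) and, since $f'\neq0$ throughout $D$ by \eqref{2.7}, the function $h=(f'+c)/f'$ is analytic and non-vanishing on the interior of $D$ with $|h|^2=|f'+c|^2/|f'|^2=E_0/E$. Hence $H:=h\circ f^{-1}$ is analytic on the interior of the rectangle, $\varphi_{max}$-periodic in $q$, and $\mathcal E(p)=\tfrac12\int_0^{\varphi_{max}}|H(q+ip)|^2\,\ud q$. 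From here I would transcribe the argument in the proof of Theorem~\ref{Theorem3.1} with $E^s$ replaced by $|H|^2$: transplanting $H$ through $q+ip\mapsto\mathrm{e}^{ik(q+ip)}$ gives, by Morera's theorem, a function $\widehat H$ analytic on the annulus $\mathcal S$ with $\mathcal E(p)=\tfrac{\varphi_{max}}{2}M_2(\widehat H,\mathrm{e}^{-kp})$; Hardy's convexity theorem makes $r\mapsto M_2(\widehat H,r)$ non-decreasing on $[\mathrm{e}^{km},1]$ and $\log r\mapsto\log M_2(\widehat H,r)$ convex, and the Laurent expansion $M_2(\widehat H,r)=\sum_{n\in\mathbb Z}|c_n|^2r^{2n}$ shows, exactly as in the proof of Theorem~\ref{Theorem3.1}, that $p\mapsto M_2(\widehat H,\mathrm{e}^{-kp})$ is convex. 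Consequently $\mathcal E(p)$ is non-increasing (since $p\mapsto\mathrm{e}^{-kp}$ is non-increasing), convex, and $\log\mathcal E(p)=\log(\varphi_{max}/2)+\log M_2(\widehat H,\mathrm{e}^{-kp})$ is convex because $\log M_2(\widehat H,\cdot)$ is convex in $\log r=-kp$, which is affine in $p$.

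The only genuine obstacle is the frame bookkeeping in the first step — correctly identifying $x_p'^2+z_p'^2$ with $2E_0$ and $(x_p'-c)^2+z_p'^2$ with $2E$, and checking that one time-period of the trajectory corresponds to an increment $\varphi_{max}$ in $q$ — since once $\ud t=\ud q/(2E)$ is in place the constancy of $\mathbb E$ is immediate and the assertions about $\mathcal E$ are a direct transcription of Theorem~\ref{Theorem3.1} with $|H|^2$ in place of $E^s$. A minor technical point worth stating is that $\widehat H$ extends to an $H^2$ (indeed continuous) function on the closed annulus, which holds because $f^{-1}$ is a homeomorphism of the closures and $f'$ is bounded away from $0$.
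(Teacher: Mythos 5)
Your proof is correct and follows essentially the same route as the paper: both reduce $\mathcal E(p)$ to $\tfrac{\varphi_{max}}{2}M_2$ of the analytic function $(f'+c)/f'$ (the paper writes it as $1+c(f^{-1})'$) transplanted to the annulus, and then invoke Hardy's convexity theorem exactly as in Theorem~\ref{Theorem3.1}. Your direct substitution $\ud t=\ud q/(2E)$, which yields $\mathbb E=\varphi_{max}/2$ in one line, is a clean streamlining of the paper's two-step change of variables $t\mapsto X\mapsto q$ and of its terse ``when $c=0$'' justification for the constancy of $\mathbb E$.
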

\begin{proof}
The hodograph transform $f=q+ip=\varphi+i\psi$ is analytic, then $f'=u-c+i(-w)$. Since the fluid field remains unchanged with time in steady flow, we have
\begin{align}
\frac{\partial \varphi}{\partial t}=\frac{\partial q}{\partial t}=0, \quad \frac{\partial \psi}{\partial t}=\frac{\partial p}{\partial t}=0.
\end{align}
Furthermore, we have
\begin{align*}
\frac{\partial}{\partial t}x(q,p)=x_q\frac{\partial q}{\partial t}+x_p\frac{\partial p}{\partial t}=0.
\end{align*}
For $t\in[0,T(p)]$, we have
\begin{align*}
\mathcal E(q, p)&=\frac{1}{2}\int_0^{T(p)}[(x_p'(t;x(q,p)))^2+(z_p'(t;x(q,p)))^2]\,\ud t\\
&=\frac{1}{2}\int_0^{T(p)}[(x_p(t;x(q,p)-ct)'+c)^2+(z_p'(t;x(q,p)))^2]\,\ud t\\
&=\frac{1}{2}\int_0^{T(p)}|f'(x_p(t;x(q,p))-ct, z_p'(t;x(q,p)))+c|^2\,\ud t\\
&=\frac{1}{2}\int_0^{T(p)}|f'(x_p(t;x(q,p))-ct, g_p'(x_p(t;x(q,p))-ct))+c|^2\,\ud t
\end{align*}
Let $X=x_p(t;x(q,p))-ct$, then we have $\ud X=(u(x_p(t;x(q,p))-ct, z_p(t;x(q,p)))-c)\ud t$. Moreover, $X(0)=x_p(x(q,p))$, and \begin{equation*}
X(T(p))=x_p(T(p);x(q,p))-cT(p)=x_p(x(q,p))+L.
\end{equation*}
Due to the periodicity of $u$ and $f'$, we have
\begin{align}\label{5.4}
\mathcal E(q, p)&=\frac{1}{2}\int_{x_p(x(q,p))}^{x_p(x(q,p))+L}|f'(X,g_p(X))+c|^2 \frac{1}{u(X,g_p(X))-c}\,\ud X\nonumber\\
&=\frac{1}{2}\int_{-\frac{L}{2}}^{\frac{L}{2}}|f'(x,g_p(x))+c|^2 \frac{1}{u(x,g_p(x))-c}\,\ud x,
\end{align}
which shows that $\mathcal E$ only depends on the streamline $p=\psi$, and is dependent of any point $(q,p)$ on the streamline.
We consider the following transform:
\begin{align*}
q=q_p(x)=\varphi(x,g_p(x)) , \quad (q,p)=f(x,z).
\end{align*}
In light of $\eqref{4.10}$ and $\eqref{4.11}$, we have
\begin{align}\label{5.5}
\mathcal E(p)&=\frac{1}{2}\int_{-\frac{L}{2}}^{\frac{L}{2}}|f'(x,g_p(x))+c|^2 \frac{1}{u(x,g_p(x))-c}\,\ud x\nonumber\\
&=\frac{1}{2}\int_{0}^{\varphi_{max}}\left|\frac{f'+c}{f'}\right|^2(f^{-1}(q,p)) \,\ud q\nonumber\\
&=\frac{1}{2}\int_{0}^{\varphi_{max}}|1+c(f^{-1})'(q,p)|^2 \,\ud q.
\end{align}
When $c=0$, we can infer that $\mathbb E(p)=\mathcal E(p)=\varphi_{max}/2$.
In light of $\eqref{4.12}$, we have
\begin{align}\label{5.6}
\mathcal E(p)=\frac{1}{2}\int_{0}^{\varphi_{max}}\frac{E_0}{E}(f^{-1}(q,p)) \,\ud q,
\end{align}
where $E_0=(u^2+w^2)/2$ is kinetic energy for a fluid particle at different depths.
Note that $Q(q,p)=1+c(f^{-1})'(q,p)$ is analytic on $\mathcal R$. Consider that the map $\mathcal S\rightarrow \mathbb C$, given by
\begin{equation*}
\hat{Q}(\xi)=(Q\circ f^{-1})(-\frac{i}{k}\log \xi),
\end{equation*}
where
\begin{align*}
\mathcal{S} = \left\{\xi \in \mathbb{C}: \mathrm{e}^{km} \leq |\xi| \leq 1\right\}.
\end{align*}
According to the Morera's theorem, $\hat{Q}(\xi)$ is analytic on $\mathcal S$.
Let $\theta=kq$ and $r=\mathrm{e}^{-kp}$, where $k=2\pi/\varphi_{max}$, we find
\begin{align*}
\mathcal E(-\frac{\log r}{k}))&=\frac{1}{2k}\int_{0}^{2\pi}\left|Q(f^{-1}(\frac{\theta}{k},-\frac{\log r}{k}))\right|^2 \,\ud \theta\\
&=\frac{1}{2k}\int_{0}^{2\pi}\left|Q(f^{-1}(\frac{\theta}{k}-i\frac{\log r}{k}))\right|^2 \,\ud \theta\\
&=\frac{1}{2k}\int_{0}^{2\pi}\left|Q(f^{-1}(-\frac{i}{k}\log(r\mathrm{e}^{i\theta})))\right|^2 \,\ud \theta\\
&=\frac{\varphi_{max}}{2}M_2(\hat{Q},r).
\end{align*}
Setting $r=\mathrm{e}^{-kp}$, we obtain
\begin{equation*}
\mathcal E(p)=\frac{\varphi_{max}}{2}M_2(\hat{Q},\mathrm{e}^{-kp}).
\end{equation*}
The exact same argument as the one used in the proof of Theorem $\ref{Theorem3.1}$ shows that the function $\mathcal E(p)$ is non-increasing and convex, and $ \log \mathcal E(p)$ is convex, which we complete the proof the theorem.
\end{proof}
\noindent Moreover, by $\eqref{3.1}$ and $\eqref{5.6}$, we have
\begin{align}
\mathcal E(p)=\frac{\varphi_{max}}{2} \mathcal M_1(E_0\cdot E^{-1},p).
\end{align}
Since $c$ is a constant, it cannot affect the monotonicity, concavity and convexity. Therefore, according to Theorem $\ref{Theorem3.1}$, for $p\in[0,|m|]$ and $s=2$, the function $\mathcal M_2(E_0,p)$ is convex and non-increasing, and $\log\mathcal
M_2(E_0,p)$ is convex. For $p\in[0,|m|]$ and $s=-2$, the function $\mathcal M_{-2}(E,p)$ is convex and non-increasing, and $\log \mathcal M_{-2}(E,p)$ is convex.
Let $H(p)=\mathcal M_2(E_0,p)\cdot\mathcal M_{-2}(E,p)$, according to Lemma $\ref{Lemma5.1}$, we obtain the function $H(p)$ is convex and non-increasing. It's easy to find that $\log H(p)$ is convex. Furthermore, taking advantage of the Cauchy-Schwarz inequality, we find
\begin{equation*}
\mathcal E(p)=\frac{\varphi_{max}}{2} \mathcal M_1(E_0\cdot E^{-1},p)\leq \frac{\varphi_{max}}{2}\left(\mathcal M_2(E_0,p)\right)^{\frac{1}{2}} \cdot\left(\mathcal M_{-2}(E,p)\right)^{\frac{1}{2}} =\frac{\varphi_{max}}{2}\sqrt {H(p)}.
\end{equation*}
Furthermore, we have
\begin{equation*}
\log \mathcal E(p)\leq\frac{1}{2}\log {H(p)}+\log{\frac{\varphi_{max}}{2}}.
\end{equation*}
Moreover, the function $\log {H(p)}$ is convex.
\begin{remark}
Suppose $p\in[0,|m|]$, Theorem $\ref{Theorem5.1}$ ensures that $\mathcal E(p)$ is always less than $\mathbb E(p)$, i.e. the total kinetic energy of a fluid particle is always less than the total kinetic energy of a fluid particle passing to the moving frame in fluid domain over a streamline time-period $T(p)$.
\end{remark}
Now, we consider extending Theorem $\ref{Theorem5.1}$ to the general case.
\begin{theorem}\label{Theorem5.2}
Suppose $s\in(-\infty, -\frac{1}{2}]\cup [\frac{1}{2}, +\infty)\cup\{0\}$, $t\in[0,T(p)]$ and $(q,p)\in[0,\varphi_{max}]\times[0,|m|]$. $\mathcal E_s(q,p)$ given by
\begin{align}
\mathcal E_s(q, p)=\frac{1}{2}\int_0^{T(p)}[(x_p'(t;x(q,p))^2+(z_p'(t;x(q,p))^2]^s\,\ud t
\end{align}
is a convex and non-increasing function, and only depends on the streamline $p=\psi$. Moreover, the function $\log\mathcal E_s(q,p)$ is convex. For $s\in (-\infty, \frac{1}{2}]\cup [\frac{3}{2}, +\infty)$, passing to the moving frame,
\begin{align}
\mathbb E_s(q,p)=\frac{1}{2}\int_0^{T(p)}[(x_p'(t;x(q,p))-c)^2+(z_p'(t;x(q,p)))^2]^s\,\ud t
\end{align}
is a convex and non-increasing function, and only depends on the streamline $p=\psi$. Moreover, the function $\log\mathbb E_s(q,p)$ is convex.
\end{theorem}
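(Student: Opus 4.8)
The plan is to reduce $\mathcal{E}_s(q,p)$ and $\mathbb{E}_s(q,p)$ to integrals over the horizontal segment $\{0\le q\le\varphi_{max}\}$ of the rectangle $\mathcal{R}$, exactly as in the proof of Theorem \ref{Theorem5.1}, and then to recognise the resulting integrands as powers of the modulus of an analytic function, so that Hardy's convexity theorem can be invoked in the same way as in Theorem \ref{Theorem3.1}.

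For $s\neq 0$ I would perform the two changes of variables $t\mapsto X=x_p(t)-ct$ (so $\ud X=(u-c)\,\ud t$) and $X\mapsto q=q_p(X)=\varphi(X,g_p(X))$ (so $\ud q=\tfrac{2E}{u-c}\,\ud X$ by \eqref{4.11}--\eqref{4.12}), exactly as in the proof of Theorem \ref{Theorem5.1}; over one streamline time-period $T(p)$ the particle advances one spatial period, so $q$ sweeps an interval of length $\varphi_{max}$, which by $L$-periodicity of the integrand may be taken to be $[0,\varphi_{max}]$. Writing $(x_p')^2+(z_p')^2=|f'+c|^2=2E_0$ and $(x_p'-c)^2+(z_p')^2=|f'|^2=2E$ as in Theorem \ref{Theorem5.1}, and recalling \eqref{3.1}, the substitution turns the two quantities into
\[
\mathcal{E}_s(p)=\frac12\int_0^{\varphi_{max}}\frac{|f'+c|^{2s}}{|f'|^{2}}\,(f^{-1}(q,p))\,\ud q,
\qquad
\mathbb{E}_s(p)=2^{s-2}\,\varphi_{max}\,\mathcal{M}_{s-1}(E,p);
\]
in particular both depend only on $p$, that is, only on the streamline, as in Theorem \ref{Theorem5.1}.

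For $\mathbb{E}_s$ the claim then follows at once: the hypothesis $s\in(-\infty,\tfrac12]\cup[\tfrac32,+\infty)$ is exactly $s-1\in(-\infty,-\tfrac12]\cup[\tfrac12,+\infty)$, so Theorem \ref{Theorem3.1} gives that $\mathcal{M}_{s-1}(E,p)$ is convex, non-increasing and has convex logarithm, and multiplication by the positive constant $2^{s-2}\varphi_{max}$ preserves all three. For $\mathcal{E}_s$ with $s\neq0$ I would write $\tfrac{|f'+c|^{2s}}{|f'|^2}=|g|^{2s}$ with $g=(f'+c)\,(f')^{-1/s}$. Since $u>0$ and $u-c>0$ throughout the flow (by \eqref{2.7} and the last line of \eqref{2.10}), both $f'=u-c-iw$ and $f'+c=u-iw$ take values in the right half-plane, which is simply connected and omits the origin; hence $(f')^{-1/s}$ is a well-defined analytic function and $g$ is analytic and zero-free in the interior of $D$. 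By the $L$-periodicity of $u$ and $w$, $g\circ f^{-1}$ is $\varphi_{max}$-periodic in $q$, so it descends to an analytic function $\hat{g}$ on the annulus $\mathcal{S}=\{\mathrm{e}^{km}\le|\xi|\le1\}$, and the computation leading to \eqref{3.2} gives $\mathcal{E}_s(p)=\tfrac{\varphi_{max}}{2}M_{2s}(\hat{g},\mathrm{e}^{-kp})$. For $s\ge\tfrac12$ one has $2s\ge1$ and applies Hardy's convexity theorem to $M_{2s}(\hat{g},\cdot)$ directly; for $s\le-\tfrac12$ one rewrites $M_{2s}(\hat{g},r)=M_{-2s}(1/\hat{g},r)$ with $1/\hat{g}$ analytic (as $\hat{g}$ never vanishes) and $-2s\ge1$, and applies Hardy's theorem to $1/\hat{g}$. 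In both cases the argument reproducing \eqref{3.4} together with the Laurent-series estimate in the proof of Theorem \ref{Theorem3.1} yields that $\mathcal{E}_s(p)$ is convex and non-increasing and that $\log\mathcal{E}_s(p)$ is convex. The remaining value $s=0$ is handled separately: $\mathcal{E}_0(p)=\tfrac12\int_0^{T(p)}\ud t=\tfrac12T(p)$, and Theorem \ref{Theorem4.1} already provides the convexity, monotonicity, log-convexity and streamline-dependence of $T(p)$.

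The only genuinely new point, relative to Theorems \ref{Theorem3.1}, \ref{Theorem4.1} and \ref{Theorem5.1}, is the construction of the analytic function whose modulus realises the $\mathcal{E}_s$-integrand: one must use the no-stagnation condition $u-c>0$ together with $u>0$ to confine both $f'$ and $f'+c$ to the right half-plane, which is what legitimises the real power $(f')^{-1/s}$ and makes $g$ and $1/\hat{g}$ analytic and zero-free. This, and the requirement that the relevant Hardy exponents ($2s$ for $\mathcal{E}_s$, $s-1$ for $\mathbb{E}_s$) lie in the admissible range of Theorem \ref{Theorem3.1}, is exactly what forces the stated restrictions on $s$; everything else is a routine transcription of the three earlier proofs.
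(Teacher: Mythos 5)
Your reduction of $\mathbb E_s$ to $2^{s-2}\varphi_{max}\,\mathcal M_{s-1}(E,p)$ and of the case $s=0$ to $\tfrac12 T(p)$ coincides with the paper's proof, as does the change of variables yielding the $q$-integral representations and the streamline-dependence. For $\mathcal E_s$ with $s\neq 0,1$, however, you take a genuinely different --- and in fact more complete --- route. The paper writes $\mathcal E_s(p)=2^{s-2}\varphi_{max}\,\mathcal M_1(E_0^{s}E^{-1},p)$ and then appeals to Theorem \ref{Theorem3.1} and Lemma \ref{Lemma5.1} ``similar to Theorem \ref{Theorem5.1}''; but Lemma \ref{Lemma5.1} concerns products of integral means, not means of products, and the Cauchy--Schwarz step used after Theorem \ref{Theorem5.1} only yields an upper bound of $\mathcal E_s(p)$ by a convex non-increasing function, which does not transfer convexity or monotonicity to $\mathcal E_s$ itself. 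Your construction of the zero-free analytic function $g=(f'+c)(f')^{-1/s}$ --- legitimate because $u>0$ and $u-c>0$ confine both $f'+c$ and $f'$ to the right half-plane, so a single-valued, periodic branch of $(f')^{-1/s}$ exists and $|g|^{2s}=|f'+c|^{2s}|f'|^{-2}$ is exactly the integrand --- lets Hardy's theorem be applied directly to $\hat g$ for $s\ge\tfrac12$ and to $1/\hat g$ for $s\le-\tfrac12$. This supplies the argument the paper only gestures at, while staying within the same Hardy-space framework; the one caveat is that it inherits whatever reservations attach to the monotonicity statement of Theorem \ref{Theorem3.1} on an annulus (including the Laurent-series identity for $M_{2s}$ with $2s\neq 2$), exactly as the paper's own proof does.
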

\begin{proof}
The proof this theorem is similar to Theorem $\ref{Theorem5.1}$. Therefore, we only sketch some of main steps.
In light of $\eqref{5.4}$-$\eqref{5.5}$, we have
\begin{align}
\mathcal E_s(q, p)&=\frac{1}{2}\int_{-\frac{L}{2}}^{\frac{L}{2}}|f'(x,g_p(x))+c|^{2s} \frac{1}{u(x,g_p(x))-c}\,\ud x \nonumber\\
&=\frac{1}{2}\int_{0}^{\varphi_{max}}\frac{|f'+c|^{2s}}{|f'|^2}(f^{-1}(q,p)) \, \ud q.\nonumber
\end{align}
Let $c=0$, we have
\begin{align}
\mathbb E_s(q,p)=\frac{1}{2}\int_{0}^{\varphi_{max}}|f'|^{2(s-1)}(f^{-1}(q,p)) \, \ud q.\nonumber
\end{align}
Moreover, we obtain
\begin{align}\label{5.10}
\mathbb E_s(q,p)=2^{s-2}\int_{0}^{\varphi_{max}}E^{s-1}(f^{-1}(q,p)) \,\ud q
\end{align}
and
\begin{align}\label{5.11}
\mathcal E_s(q,p)=2^{s-2}\int_{0}^{\varphi_{max}}\frac{E_0^s}{E}(f^{-1}(q,p)) \,\ud q,
\end{align}
where $E_0=(u^2+w^2)/2$ and $E=((u-c)^2+w^2)/2$.
From $\eqref{5.10}$ and $\eqref{5.11}$, we can obtain that $\mathcal E_s$  and $\mathbb E_s$ only depend on the streamline $p=\psi$.
According to Theorem $\ref{Theorem3.1}$, for $s-1\in(-\infty,-\frac{1}{2}]\cup[\frac{1}{2},+\infty)$, we have
\begin{align*}
\mathbb E_s(p)=2^{s-2}\varphi_{max} \mathcal M_{s-1}(E,p).
\end{align*}
Therefore, for $p\in[0,|m|]$ and $s\in(-\infty,\frac{1}{2}]\cup[\frac{3}{2},+\infty)$, the function $\mathbb E_s(p)$ is convex and non-increasing, and $\log \mathbb E_s(p)$ is convex.
For $\mathcal E_s(p)$, if $s=0$, then we have
\begin{align*}
\mathcal E_s(p)=\frac{T(p)}{2}.
\end{align*}
According to Theorem $\ref{Theorem4.1}$, for $p\in[0,|m|]$, the function $\mathcal E_s(p)$ is convex and non-increasing, and $\log \mathcal E_s(p)$ is convex.
When $s\in(-\infty,-\frac{1}{2}]\cup[\frac{1}{2},+\infty)$, we have
\begin{align*}
\mathcal E_s(p)=2^{s-2}\varphi_{max} \mathcal M_1(E^{s}\cdot E^{-1},p),
\end{align*}
Similar to argument in Theorem $\ref{Theorem5.1}$, according to Theorem $\ref{Theorem3.1}$ and Lemma $\ref{Lemma5.1}$, we deduce that the function $\mathcal E_s(p)$ is convex and non-increasing, and $\log \mathcal E_s(p)$ is convex.
\end{proof}

\section{Kinetic energy and area between two streamlines}\label{sec6}
In this section, we discuss some properties of the total energy within the region
\begin{align}
\mathcal D_p =\{(x,z)\in \mathbb{R}: -L/2 \leq x \leq L/2,g_p(x)\leq z\leq \eta(x)\}\nonumber
\end{align}
between any streamline $z=g_p(x)$ and the free surface $z=\eta(x)$, which is given by
\begin{align}
\mathcal K(p)=\frac{1}{2}\iint_{D_p} (u^2+w^2)\,\ud S
\end{align}
and the total energy within the region $\mathcal D_p $ passing to the moving frame, which is given by
\begin{align}
\mathbb K(p)=\frac{1}{2}\iint_{D_p} ((u-c)^2+w^2)\,\ud S,
\end{align}
where $\ud S$ is area parameter and $p\in[0,|m|]$.
We consider the hodograph transform $f$: $\mathcal D_p\rightarrow \mathcal R_p$, given by
$f=q+ip_1=\varphi+i\psi$,
where
\begin{align*}
\mathcal R_p=\{(q,p_1): 0 \leq q \leq \varphi_{\max},  0 \leq p_1 \leq p\}.
\end{align*}
\begin{theorem}\label{Theorem6.1}
Suppose $p\in[0,|m|]$. The total kinetic energy $\mathcal K(p)$ within the region $\mathcal D_p $ is a concave and strictly non-decreasing function. The total energy $\mathbb K(p)$ within the region $\mathcal D_p $ passing to the moving frame is strictly non-decreasing and equal to $(\varphi_{\max}p)/2$.
\end{theorem}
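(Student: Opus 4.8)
The plan is to push both integrals through the hodograph transform $f=q+ip_1=\varphi+i\psi$, which maps $\mathcal D_p$ conformally onto the rectangle $\mathcal R_p$ and whose inverse has Jacobian $|(f^{-1})'|^2 = 1/|f'|^2 = 1/(2E)$ by the determinant computation in \eqref{3.17}. For the moving-frame energy this is immediate: since $|f'|^2 = 2E$, the change of variables gives
\begin{align*}
\mathbb K(p)=\frac12\iint_{\mathcal D_p}|f'|^2\,\ud S=\frac12\iint_{\mathcal R_p}|f'|^2\cdot\frac{1}{|f'|^2}(f^{-1}(q,p_1))\,\ud A'(q,p_1)=\frac12|\mathcal R_p|=\frac{\varphi_{\max}p}{2},
\end{align*}
exactly as in Theorem \ref{Theorem3.2}. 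Differentiating, $\mathbb K'(p)=\varphi_{\max}/2>0$, so $\mathbb K$ is affine in $p$ and strictly non-decreasing.

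For $\mathcal K(p)$ I would write $E_0=|f'+c|^2/2$ and use $(f^{-1})'=1/f'$ to obtain, after the same change of variables,
\begin{align*}
\mathcal K(p)=\frac12\iint_{\mathcal R_p}\frac{|f'+c|^2}{|f'|^2}(f^{-1}(q,p_1))\,\ud A'(q,p_1)=\frac12\iint_{\mathcal R_p}\bigl|1+c(f^{-1})'(q,p_1)\bigr|^2\,\ud A'(q,p_1).
\end{align*}
Then Fubini's theorem (legitimate because $f$, $f'$ are smooth on the closed rectangle and $f'\neq0$ by \eqref{2.7}) lets me carry out the $q$-integration first, and comparison with the representation \eqref{5.5} of the total kinetic energy $\mathcal E(p_1)$ over a streamline time-period shows that the inner integral equals $2\mathcal E(p_1)$. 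Hence $\mathcal K(p)=\int_0^p\mathcal E(p_1)\,\ud p_1$, and differentiating under the integral sign gives $\mathcal K'(p)=\mathcal E(p)$ and $\mathcal K''(p)=\mathcal E'(p)$.

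The conclusion is now read off from Theorem \ref{Theorem5.1}: $\mathcal E$ is non-increasing, so $\mathcal K''=\mathcal E'\le0$ and $\mathcal K$ is concave. For strict monotonicity I would note that $1+c(f^{-1})'=(f'+c)/f'=(u-iw)/f'$, and since $u>0$ throughout $D$ by \eqref{2.10}, the integrand $|1+c(f^{-1})'|^2$ is strictly positive on every slice $\mathcal R_{p_1}$; thus $\mathcal E(p_1)>0$ for all $p_1$, so $\mathcal K'=\mathcal E>0$ and $\mathcal K$ is strictly non-decreasing. There is no serious obstacle here once Theorems \ref{Theorem3.2} and \ref{Theorem5.1} are available; the only place demanding a little care is the justification of Fubini and of differentiation under the integral sign, together with the strict-positivity observation that upgrades ``non-decreasing'' to ``strictly non-decreasing.''
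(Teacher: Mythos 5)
Your proposal is correct and follows essentially the same route as the paper: both push $\mathcal K$ and $\mathbb K$ through the hodograph transform onto $\mathcal R_p$, identify $\mathcal K'(p)=\mathcal E(p)$ via the representation \eqref{5.6}, and then invoke Theorem \ref{Theorem5.1} to get concavity, with $\mathbb K(p)=\tfrac12|\mathcal R_p|$ handled as in Theorem \ref{Theorem3.2}. Your added remarks on Fubini and on the strict positivity of the integrand are harmless refinements of the same argument.
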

\begin{proof}
Recall that the hodograph transform $f=q+ip_1=\varphi+i\psi$ maps $\mathcal D_p$ onto $\mathcal R_p$. $f'=u-c+i(-w)$ is analytic on $\mathcal D_p$, $|f'|^2=(u-c)^2+w^2$ and $|f'+c|^2=u^2+w^2$. Since $f(x,z)=(q,p_{1})$, we have $(q,p_{1})=f^{-1}(x,z)$.  Moreover, we have
\begin{align}\label{6.3}
\mathcal K(p)&=\frac{1}{2}\iint_{D_p} (u^2+w^2)\,\ud S(x,z)\nonumber\\
&=\iint_{D_p} E_0(x,z) \,\ud S(x,z)\nonumber\\
&=\frac{1}{2}\iint_{\mathcal R_p} \frac{E_0}{E}(f^{-1}(q,p_1)) \,\ud S'(q,p_1)\nonumber\\
&=\frac{1}{2}\int_0^p \int_0^{\varphi_{\max}}\frac{E_0}{E}(f^{-1}(q,p_1)) \,\ud q \ud p_1.
\end{align}
In light of $\eqref{5.6}$, we have
\begin{align*}
\frac{\ud}{\ud p}\mathcal K(p)&=\frac{1}{2} \int_0^{\varphi_{\max}}\frac{E_0}{E}(f^{-1}(q,p)) \,\ud q\\
&=\mathcal E(p)>0.
\end{align*}
Therefore, $\mathcal K(p)$ is strictly non-decreasing.
Furthermore, we obtain
\begin{align*}
\frac{\ud^2}{\ud p^2}\mathcal K(p)&=\mathcal E'(p) \leq 0.
\end{align*}
According to Theorem $\ref{Theorem5.1}$, $\mathcal E(p)$ is non-increasing, which implies $\mathcal K(p)$ is concave. If $c=0$, then we have $E_0=E$, and we have
\begin{align*}
\mathbb K(p)=\frac{\varphi_{\max}}{2}p,
\end{align*}
which is a strictly non-decreasing function.
\end{proof}
\begin{remark}
Suppose $p\in[0,|m|]$, Theorem $\ref{Theorem6.1}$ ensures that $\mathcal K(p)$ is always less than $\mathbb K(p)$, i.e. the total kinetic energy within the domain $\mathcal D_p $ is always less than the total kinetic energy within the domain $\mathcal D_p $ passing to the moving frame.
\end{remark}
\begin{theorem}
Suppose $p\in[0,|m|]$, $s\in(-\infty,-\frac{1}{2}]\cup[\frac{1}{2},+\infty)$. The total kinetic energy $\mathcal K_s(p)$ within the domain $\mathcal D_p $, given by
\begin{align}
\mathcal K_s(p)=\frac{1}{2}\iint_{D_p} (u^2+w^2)^s\,\ud S
\end{align}
is a concave and strictly non-decreasing function. For $s\in(-\infty,\frac{1}{2}]\cup[\frac{3}{2},+\infty)$, the total energy $\mathbb K_s(p)$ within the domain $\mathcal D_p $ passing to the moving frame, given by
\begin{align}
\mathbb K_s(p)=\frac{1}{2}\iint_{D_p} \left((u-c)^2+w^2\right)^s\,\ud S,
\end{align}
is a concave and strictly non-decreasing function.
\end{theorem}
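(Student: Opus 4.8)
The plan is to carry over the proof of Theorem~\ref{Theorem6.1} essentially verbatim, with Theorem~\ref{Theorem5.2} playing the role that Theorem~\ref{Theorem5.1} played there. First I would invoke the hodograph transform $f=q+ip_1=\varphi+i\psi$ taking $\mathcal D_p$ onto $\mathcal R_p$, so that $|f'|^2=(u-c)^2+w^2=2E$, $|f'+c|^2=u^2+w^2=2E_0$, and the Jacobian of the inverse map is $|f'|^{-2}=(2E)^{-1}$ by $\eqref{3.17}$. The change of variables then gives
\[
\mathcal K_s(p)=\frac{1}{2}\iint_{\mathcal D_p}(2E_0)^s\,\ud S
=2^{s-2}\int_0^{p}\!\!\int_0^{\varphi_{\max}}\frac{E_0^s}{E}\bigl(f^{-1}(q,p_1)\bigr)\,\ud q\,\ud p_1,
\]
\[
\mathbb K_s(p)=\frac{1}{2}\iint_{\mathcal D_p}(2E)^s\,\ud S
=2^{s-2}\int_0^{p}\!\!\int_0^{\varphi_{\max}}E^{\,s-1}\bigl(f^{-1}(q,p_1)\bigr)\,\ud q\,\ud p_1 .
\]
Here the integrands are well defined and strictly positive on the whole domain: $E>0$ by $\eqref{2.7}$, and $E_0=(u^2+w^2)/2\ge u^2/2>0$ since $u>0$ on $D$ by $\eqref{2.10}$.

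Next I would differentiate in $p$. Applying the fundamental theorem of calculus to the outer integral and comparing with $\eqref{5.10}$ and $\eqref{5.11}$,
\[
\frac{\ud}{\ud p}\mathcal K_s(p)=2^{s-2}\int_0^{\varphi_{\max}}\frac{E_0^s}{E}\bigl(f^{-1}(q,p)\bigr)\,\ud q=\mathcal E_s(p),
\qquad
\frac{\ud}{\ud p}\mathbb K_s(p)=2^{s-2}\int_0^{\varphi_{\max}}E^{\,s-1}\bigl(f^{-1}(q,p)\bigr)\,\ud q=\mathbb E_s(p).
\]
Both right–hand sides are strictly positive, being integrals of a positive continuous function over a segment of positive length, so $\mathcal K_s$ and $\mathbb K_s$ are strictly non-decreasing. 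Differentiating once more yields $\mathcal K_s''(p)=\mathcal E_s'(p)$ and $\mathbb K_s''(p)=\mathbb E_s'(p)$. By Theorem~\ref{Theorem5.2}, $\mathcal E_s(p)$ is non-increasing for $s\in(-\infty,-\tfrac12]\cup[\tfrac12,+\infty)$ and $\mathbb E_s(p)$ is non-increasing for $s\in(-\infty,\tfrac12]\cup[\tfrac32,+\infty)$; hence $\mathcal E_s'(p)\le 0$ and $\mathbb E_s'(p)\le 0$ on the respective ranges, giving the concavity of $\mathcal K_s$ and $\mathbb K_s$.

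I expect the only real issue to be bookkeeping rather than a genuine obstacle. One must check that the admissible exponents line up with Theorem~\ref{Theorem5.2}: for $\mathcal K_s$ the required range $s\in(-\infty,-\tfrac12]\cup[\tfrac12,+\infty)$ is contained in the range covered there (which also contains $s=0$), while for $\mathbb K_s$ one is forced to restrict to $s\in(-\infty,\tfrac12]\cup[\tfrac32,+\infty)$, precisely because $\mathbb E_s(p)=2^{s-2}\varphi_{\max}\,\mathcal M_{s-1}(E,p)$ lets one quote the Hardy-convexity conclusion of Theorem~\ref{Theorem3.1} only when $s-1\in(-\infty,-\tfrac12]\cup[\tfrac12,+\infty)$. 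The remaining point deserving a line is the legitimacy of differentiating the iterated integral with respect to $p$, which is immediate from the continuity of the integrand in $(q,p_1)$ together with the smoothness of $f^{-1}$ guaranteed by the hodograph transform.
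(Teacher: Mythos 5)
Your proposal is correct and follows essentially the same route as the paper: change variables via the hodograph transform to write $\mathcal K_s$ and $\mathbb K_s$ as iterated integrals over $\mathcal R_p$, differentiate in $p$ to identify the derivatives with $\mathcal E_s(p)$ and $\mathbb E_s(p)$, and invoke Theorem~\ref{Theorem5.2} for positivity and monotonicity of these, hence the strict monotonicity and concavity of $\mathcal K_s$ and $\mathbb K_s$. The only differences are cosmetic: you track the constant $2^{s-2}$ (which the paper drops, harmlessly for monotonicity and concavity) and you note explicitly why the integrands are positive and why differentiation under the integral is legitimate.
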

\begin{proof}
The proof this theorem is the same as Theorem $\ref{Theorem5.2}$. We only sketch some key steps. Similar to $\eqref{6.3}$, we have
\begin{align*}
\mathcal K_s(p)&=\frac{1}{2}\iint_{D_p} (u^2+w^2)^s\,\ud S \\
&=\int_0^p \int_0^{\varphi_{\max}}\frac{E_0^s}{E}(f^{-1}(q,p_1)) \,\ud q \ud p_1,
\end{align*}
and
\begin{align*}
\frac{\ud}{\ud p}\mathcal K_s(p)&=\int_0^{\varphi_{\max}}\frac{E_0^s}{E}(f^{-1}(q,p)) \,\ud q \\
&=\mathcal E_s(p)>0.
\end{align*}
Moreover, we have
\begin{align}
\frac{\ud^2}{\ud p^2}\mathcal K_s(p)=\mathcal E_s'(p)\leq 0.\nonumber
\end{align}
According to Theorem $\ref{Theorem5.2}$, $\mathcal K_s(p)$ is concave and strictly non-decreasing.
If $c=0$, then we have
\begin{align*}
\mathbb K_s(p)=\int_0^p\int_{0}^{\varphi_{max}}E^{s-1}(f^{-1}(q,p_1)) \,\ud q \ud p_1,
\end{align*}
\begin{align*}
\frac{\ud}{\ud p}\mathbb K_s(p)=\mathbb E_s(p)>0
\end{align*}
and
\begin{align*}
\frac{\ud^2}{\ud p^2}\mathbb K_s(p)=\mathbb E_s'(p)\leq 0.
\end{align*}
According to Theorem $\ref{Theorem5.2}$, $\mathbb K_s(p)$ is concave and strictly non-decreasing.
\end{proof}
Now, we consider the area within the domain $\mathcal D_p $, which is given by
\begin{align}
\mathcal S(p)=\iint_{D_{p}}  \,\ud \mathcal S.
\end{align}
\begin{theorem}\label{Theorem6.2}
The area $\mathcal S(p)$ within the domain $\mathcal D_p $ is a concave and strictly non-decreasing function.
\end{theorem}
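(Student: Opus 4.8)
\textbf{Proof proposal for Theorem \ref{Theorem6.2}.}
The plan is to reduce the area to an iterated integral over the image rectangle under the hodograph transform, exactly as was done for $\mathcal{K}(p)$ in the proof of Theorem \ref{Theorem6.1}, and then to recognize the resulting first derivative as the streamline time-period $T(p)$, so that the qualitative properties follow immediately from Theorem \ref{Theorem4.1}. First I would recall that $f=q+ip_1=\varphi+i\psi$ is a biholomorphism from $\mathcal{D}_p$ onto $\mathcal{R}_p=\{(q,p_1):0\le q\le\varphi_{\max},\,0\le p_1\le p\}$ with Jacobian $|J|=|f'|^2=2E$, exactly as in $\eqref{3.17}$. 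Changing variables via $(x,z)=f^{-1}(q,p_1)$ and using that the area element transforms by the reciprocal Jacobian $1/(2E)$, one obtains
\begin{align*}
\mathcal{S}(p)=\iint_{D_p}\ud\mathcal S=\iint_{\mathcal R_p}\frac{1}{2E}(f^{-1}(q,p_1))\,\ud S'(q,p_1)=\int_0^p\int_0^{\varphi_{\max}}\frac{1}{2E}(f^{-1}(q,p_1))\,\ud q\,\ud p_1.
\end{align*}

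Next I would differentiate in $p$. Since the inner integral is continuous in the upper limit, the fundamental theorem of calculus gives
\begin{align*}
\frac{\ud}{\ud p}\mathcal{S}(p)=\int_0^{\varphi_{\max}}\frac{1}{2E}(f^{-1}(q,p))\,\ud q=T(p),
\end{align*}
where the last equality is precisely $\eqref{4.13}$. Because the flow has no stagnation points, $\eqref{2.7}$ forces $E\ge\frac{1}{2}\delta^2>0$ throughout, so $T(p)>0$ on $[0,|m|]$ and $\mathcal{S}(p)$ is strictly non-decreasing. Differentiating once more,
\begin{align*}
\frac{\ud^2}{\ud p^2}\mathcal{S}(p)=T'(p)\le 0,
\end{align*}
since Theorem \ref{Theorem4.1} asserts that $T(p)$ is non-increasing on $[0,|m|]$; hence $\mathcal{S}(p)$ is concave, which completes the argument.

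There is essentially no serious obstacle here: the only care needed is the bookkeeping in the change of variables (the area element picks up $1/|f'|^2$ rather than $|f'|^2$, since we are pushing forward from the rectangle to the fluid domain via $f^{-1}$), and the differentiation under the integral sign, which is justified by the smoothness of $f^{-1}$ and the positivity of $E$. The genuine content is the identification $\mathcal{S}'(p)=T(p)$, after which concavity and monotonicity are inherited verbatim from the already-established properties of the streamline time-period.
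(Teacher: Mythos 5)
Your proof is correct and follows essentially the same route as the paper: change variables via the hodograph transform to get $\mathcal S(p)=\int_0^p\int_0^{\varphi_{\max}}\frac{1}{2E}(f^{-1}(q,p_1))\,\ud q\,\ud p_1$, identify $\mathcal S'(p)=T(p)>0$, and conclude concavity from $T'(p)\le 0$. Your citation of Theorem \ref{Theorem4.1} for the monotonicity of $T(p)$ is in fact the more accurate reference (the paper points to Theorem \ref{Theorem5.1} at that step).
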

\begin{proof}
In light of $\eqref{3.17}$, we obtain
\begin{align}
\mathcal S(p)&=\iint_{D_{p}}  \,\ud \mathcal S\nonumber\\
&=\iint_{\mathcal R_{p}} \frac{1}{|f'(f^{-1}(q,p_1))|^2} \,\ud \mathcal S'(q,p_1)\nonumber\\
&=\iint_{\mathcal R_{p}} \left|(f^{-1})'(f^{-1}(q,p_1))\right|^2 \,\ud \mathcal S'(q,p_1)\nonumber\\
&=\int_0^p \int_0^{\varphi_{\max}}\left|(f^{-1})'(f^{-1}(q,p_1))\right|^2 \,\ud q\,\ud p_1.
\end{align}
Since there are no stagnation points throughout the fluid domain, then we have $f'\neq 0$. The function $G(q,p_1)=(f^{-1})'(f^{-1}(q,p_1))$ is  analytic in the interior of $R_p$. By $\eqref{3.1}$, we have
\begin{align*}
\frac{\ud}{\ud p}\mathcal S(p)&=\int_0^{\varphi_{\max}}\left|(G(q,p_1)\right|^2 \,\ud q \nonumber\\
&=\int_0^{\varphi_{\max}}\frac{1}{2E}(f^{-1}(q,p)) \,\ud q \nonumber\\
&=T(p)>0.
\end{align*}
Therefore, the function $\mathcal S(p)$ is strictly non-decreasing. According to Theorem $\ref{Theorem5.1}$, we can obtain
\begin{align*}
\frac{\ud^2}{\ud p^2}\mathcal S(p)=T'(p)\leq 0.
\end{align*}
Therefore, $\mathcal S(p)$ is a concave function.
\end{proof}
\begin{theorem}
The area $\mathcal S(p)$ within the domain $\mathcal D_p $ attains its minimum that is equal to zero at $p=0$, namely the free surface $\eta$, the maximum of which is attained at $p=|m|$ and less than $\frac{|m|\varphi_{\max}}{2\delta_{0}^2}$, where $\delta_0=\underset{(q,p)\in R_0}\min \{u(f^{-1}(q,p))-c\}>0$.
\end{theorem}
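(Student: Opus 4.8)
The plan is to read off the positions of the two extrema from the qualitative information already established and then estimate the maximal value by a single elementary pointwise bound. Theorem~\ref{Theorem6.2} states that $p\mapsto\mathcal S(p)$ is strictly non-decreasing (and concave) on $[0,|m|]$; a strictly non-decreasing function on an interval attains its minimum at the left endpoint and its maximum at the right endpoint, so the minimum of $\mathcal S$ is attained at $p=0$ and the maximum at $p=|m|$, and nothing further is needed to locate them.

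To see that the minimum is zero, note that the streamline carrying the label $p=0$ is the free surface itself: since $\psi\equiv0$ on $z=\eta(x)$ forces $g_0\equiv\eta$, the set $\mathcal D_0=\{(x,z):-L/2\le x\le L/2,\ \eta(x)\le z\le\eta(x)\}$ degenerates to the graph of $\eta$ over one period, which has two-dimensional Lebesgue measure zero. Hence $\mathcal S(0)=\iint_{\mathcal D_0}\ud S=0$, and since $\mathcal S$ is \emph{strictly} increasing on $[0,|m|]$ this value is the unique minimum.

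For the bound on the maximum, observe that the streamline $p=|m|$ is the flat bed $z=-d$, so $\mathcal D_{|m|}$ is one spatial period $D_L$ of the fluid domain and $\mathcal R_{|m|}=R_0$. Using the change of variables from the proof of Theorem~\ref{Theorem6.2} (equivalently, integrating the identity $\tfrac{\ud}{\ud p}\mathcal S(p)=T(p)$, with $T$ as in \eqref{4.13}),
\[
\mathcal S(|m|)=\iint_{R_0}\bigl|(f^{-1})'(f^{-1}(q,p))\bigr|^{2}\,\ud q\,\ud p=\iint_{R_0}\frac{\ud q\,\ud p}{\bigl((u-c)^2+w^2\bigr)(f^{-1}(q,p))}.
\]
The no-stagnation hypothesis \eqref{2.7} gives $u-c\ge\delta_0>0$ on the compact closure of $R_0$, the positive minimum $\delta_0$ being attained by continuity; hence $(u-c)^2+w^2\ge\delta_0^2$ pointwise, with strict inequality on a set of positive measure (equality almost everywhere would force $w\equiv0$ and $u\equiv c+\delta_0$, contradicting the strict monotonicity of $\eta$ between crest and trough). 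Bounding the integrand from above and using $|R_0|=\varphi_{\max}|m|$ then yields the claimed strict estimate $\mathcal S(|m|)<\dfrac{|m|\varphi_{\max}}{2\delta_0^2}$. The only steps that demand genuine care are securing the \emph{strict} inequality here and tracking the precise numerical constant multiplying $1/\delta_0^2$ in the pointwise bound on $1/\bigl((u-c)^2+w^2\bigr)$; everything else follows at once from Theorems~\ref{Theorem6.1}--\ref{Theorem6.2}.
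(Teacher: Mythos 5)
Your identification of the extremal locations and the computation $\mathcal S(0)=0$ follow the paper's own route (monotonicity from Theorem \ref{Theorem6.2}, plus the degeneration of $\mathcal D_0$ to the graph of $\eta$, a set of planar measure zero), and those parts are correct. The genuine gap is in the final estimate, and you have flagged it yourself without closing it. Starting from
\begin{align*}
\mathcal S(|m|)=\int_0^{|m|}\!\!\int_0^{\varphi_{\max}}\frac{1}{\bigl((u-c)^2+w^2\bigr)(f^{-1}(q,p_1))}\,\mathrm{d} q\,\mathrm{d} p_1,
\end{align*}
the pointwise bound $(u-c)^2+w^2\geq(u-c)^2\geq\delta_0^2$ gives only $\mathcal S(|m|)\leq |m|\varphi_{\max}/\delta_0^2$, which is twice the bound in the statement. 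Your remark that the inequality is strict on a set of positive measure upgrades $\leq$ to $<$ but cannot improve the constant, and ``tracking the precise numerical constant'' is precisely the step that is missing: to reach $|m|\varphi_{\max}/(2\delta_0^2)$ you would need $(u-c)^2+w^2\geq 2\delta_0^2$ throughout, which fails at any point where $w=0$ and $u-c$ is near its minimum (for instance on the bed). So as written your argument proves $\mathcal S(|m|)<|m|\varphi_{\max}/\delta_0^2$ and not the stated inequality.

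In fairness, the paper's own proof of this step does not fare better: it freezes the integrand at its bed value $\tfrac{1}{2E}(f^{-1}(q,m))$ even though the inner integral $T(p_1)$ is non-increasing in $p_1$ (so the bed value is a \emph{lower} bound for $T(p_1)$, and the substitution cannot produce an upper bound for $\mathcal S(|m|)$), and it then writes that bed value as $\tfrac{1}{2(u-c)^2}$ although $E=((u-c)^2+w^2)/2$ gives $\tfrac{1}{2E}=\tfrac{1}{(u-c)^2}$ when $w=0$; the factor $2$ in the theorem appears to originate in that slip. Your approach is the natural one, but to match the statement you would need either a genuinely new argument recovering the factor $2$ or the corrected constant $|m|\varphi_{\max}/\delta_0^2$.
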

\begin{proof}
For $(q,p)\in[0,\varphi_{max}]\times[0,|m|]$, from $\eqref{4.12}$, we have
\begin{align*}
\mathcal S(p)&=\int_0^p \int_0^{\varphi_{\max}}\left|(f^{-1})'(f^{-1}(q,p_1))\right|^2 \,\ud q\,\ud p_1 \\
&=\int_0^p \int_0^{\varphi_{\max}}\frac{1}{2E}(f^{-1}(q,p_1)) \,\ud q\,\ud p_1.
\end{align*}
From $\eqref{2.10}$, then $p=0$ on the free surface $z=\eta(x)$.  According to Theorem $\ref{Theorem6.2}$, $\mathcal S(p)$ is non-decreasing for $p\in[0,|m|]$. Therefore, the minimum of $\mathcal S(p)$ is attained at $p=0$, which is equal to zero. The maximum of $\mathcal S(p)$ is attained at $p=|m|$. In light of $\eqref{2.8}$ and $\eqref{2.10}$, we have $w=0$ and $p=m$ on $z=-d$. It's easy to find that $\mathcal S(p)=-\mathcal S(-p)$. Let $\delta_0=\underset{(q,p)\in R_0}\min \{u(f^{-1}(q,p))-c\}>0$, then we have
\begin{align*}
\mathcal S(|m|)=-\mathcal S(m)&=-\int_0^m \int_0^{\varphi_{\max}}\frac{1}{2E}(f^{-1}(q,m)) \,\ud q\,\ud p_1\\
&=\int_{0}^{|m|} \int_0^{\varphi_{max}}\frac{1}{2(u-c)^2}(f^{-1}(q,m) \,\ud q\,\ud p_1 \leq\frac{|m|\varphi_{\max}}{2\delta_{0}^2},
\end{align*}
where $m<0$.
\end{proof}

\section{Streamline length and kinetic energy extremum }\label{sec7}
In this section, we discuss some properties of the length of a streamline and the extremum of kinetic energy. Let $x\in[-L/2,L/2]$, we consider the length of any streamline $\psi_p=(x, g_p(x))$, given by
\begin{align*}
\mathcal L(p)=\int_{\psi_p}  \ud s,
\end{align*}
where $\ud s$ is the arclength parameter.
\begin{theorem}\label{Theorem7.1}
Suppose $p\in[0,|m|]$. The function $\mathcal L(p)$ is convex and non-increasing, and $\log \mathcal L(p)$ is convex.
\end{theorem}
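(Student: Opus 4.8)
The plan is to reduce Theorem \ref{Theorem7.1} to Theorem \ref{Theorem3.1} with kinetic energy index $s=-\tfrac{1}{2}$, by running the same hodograph change of variables already used in the proof of Theorem \ref{Theorem4.1}. First I would write the streamline length explicitly: parametrizing $\psi_p$ by $x\in[-L/2,L/2]$ through $z=g_p(x)$ gives $\mathcal L(p)=\int_{-L/2}^{L/2}\sqrt{1+(g_p'(x))^2}\,\ud x$. By $\eqref{4.2}$ we have $g_p'(x)=w/(u-c)$, and since $\eqref{2.7}$ forces $u-c>0$ throughout the flow, the arclength element simplifies to
\[
\sqrt{1+(g_p'(x))^2}=\frac{\sqrt{(u-c)^2+w^2}}{u-c}=\frac{\sqrt{2E}}{u-c},\qquad E=\tfrac12\bigl((u-c)^2+w^2\bigr).
\]

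Next I would transport the integral to the rectangle exactly as in $\eqref{4.10}$--$\eqref{4.13}$. Setting $q=q_p(x)=\varphi(x,g_p(x))$, formulas $\eqref{4.11}$--$\eqref{4.12}$ give $q_p'(x)=2E/(u-c)$, so that $\ud x=\frac{u-c}{2E}\,\ud q$ and $q$ sweeps $[0,\varphi_{\max}]$ as $x$ sweeps $[-L/2,L/2]$. Substituting, the factors of $u-c$ cancel and
\[
\mathcal L(p)=\int_0^{\varphi_{\max}}\frac{\sqrt{2E}}{u-c}\cdot\frac{u-c}{2E}\,\ud q=\int_0^{\varphi_{\max}}\bigl(2E(f^{-1}(q,p))\bigr)^{-1/2}\,\ud q=\frac{\varphi_{\max}}{\sqrt2}\,\mathcal M_{-1/2}(E,p),
\]
with $\mathcal M_s$ as defined in $\eqref{3.1}$; note $f'\neq0$ throughout the flow, again by $\eqref{2.7}$, so the negative power of $E$ is well defined.

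It then remains only to invoke Theorem \ref{Theorem3.1}. Since $-\tfrac12\in(-\infty,-\tfrac12]\cup[\tfrac12,+\infty)$, that theorem gives that $\mathcal M_{-1/2}(E,p)$ is non-increasing and convex on $[0,|m|]$ and that $\log\mathcal M_{-1/2}(E,p)$ is convex. Multiplying by the positive constant $\varphi_{\max}/\sqrt2$ preserves monotonicity and convexity and merely adds $\log(\varphi_{\max}/\sqrt2)$ to the logarithm, so $\mathcal L(p)$ is convex and non-increasing and $\log\mathcal L(p)$ is convex, as claimed.

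I do not expect a real obstacle. The one step needing care is the algebraic identity $\sqrt{1+(g_p'(x))^2}\,\ud x=(2E)^{-1/2}\,\ud q$, i.e. checking that the Jacobian of the hodograph change of variables combines with the Euclidean arclength element to yield precisely the integrand of $\mathcal M_{-1/2}(E,\cdot)$; once this is established, all the analytic work --- the Hardy convexity theorem on the annulus and the evenness argument upgrading ``non-decreasing in $r$'' to ``non-increasing in $p$'' --- has already been done in the proof of Theorem \ref{Theorem3.1} and can simply be cited.
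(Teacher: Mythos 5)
Your proposal is correct and follows essentially the same route as the paper: rewrite the arclength element via $g_p'=w/(u-c)$, change variables through the hodograph transform using $q_p'(x)=2E/(u-c)$ to obtain $\mathcal L(p)=\tfrac{\varphi_{\max}}{\sqrt2}\,\mathcal M_{-1/2}(E,p)$, and then invoke Theorem \ref{Theorem3.1} with $s=-\tfrac12$. The only difference is that you spell out the Jacobian cancellation explicitly, whereas the paper cites $\eqref{4.2}$ and $\eqref{4.12}$ and writes the result directly.
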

\begin{proof}
The hodograph transform $f=q+ip=\varphi +i\psi $ is analytic. $f'=u-c+i(-w)$ is analytic,  and $|f'|^2=2E$. In light of $\eqref{4.2}$ and $\eqref{4.12}$, we have
\begin{align*}
\mathcal L(p)&=\int_{\psi_p}  \ud s\nonumber\\
&=\int_{\frac{-L}{2}}^{\frac{L}{2}} \sqrt{1+(g_p'(x))^2} \,\ud x \nonumber\\
&=\int_0^{\varphi_{\max}} \frac{1}{|f'|}(f^{-1}(q,p))\,\ud q \nonumber \\
&=\frac{\sqrt{2}}{2}\int_0^{\varphi_{\max}} \frac{1}{\sqrt{E}}(f^{-1}(q,p))\,\ud q \nonumber \\
&=\frac{\sqrt{2}\varphi_{\max}}{2}\mathcal M(E^{-\frac{1}{2}},p).
\end{align*}
According to Theorem $\ref{Theorem3.1}$, for $s=-\frac{1}{2}$, $\mathcal L(p)$ is convex and non-increasing, and $\log \mathcal L(p)$ is convex.
\end{proof}
\begin{theorem}
The length $\mathcal L(p)$ of the streamline $\psi_p$ attains its maximum at $p=0$, namely the free surface $\eta$, the minimum of which is attained at $p=|m|$ and less than $\frac{\sqrt{2}\varphi_{\max}}{2\delta_0}$, where $\delta_0=\underset{(q,p)\in R_0}\min \{u(f^{-1}(q,p))-c\}>0$.
\end{theorem}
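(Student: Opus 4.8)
The plan is to read off this statement almost entirely from Theorem~\ref{Theorem7.1}, which already carries all the analytic content: the only remaining work is to identify the two horizontal edges $p=0$ and $p=|m|$ of the rectangle $\mathcal R$ with the free surface and the flat bed, and then to perform one elementary estimate along the bed. I do not expect a genuine obstacle here---the sole point that needs care is the bookkeeping of numerical constants in the final inequality, and in that sense this theorem is a quick corollary of the previous one.

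First I would invoke the boundary data recorded in \eqref{2.10}: since $\psi=0$ on $z=\eta(x)$ and $\psi=m$ on $z=-d$, under the hodograph transform $f=\varphi+i\psi$ the line $p=0$ in $\mathcal R$ is the image of the free surface $z=\eta(x)$, while the line $p=|m|=-m$ (recall $m<0$) is the image of the flat bed $z=-d$. By Theorem~\ref{Theorem7.1} the map $p\mapsto\mathcal L(p)$ is non-increasing on $[0,|m|]$, so it attains its maximum over $[0,|m|]$ at $p=0$, i.e.\ on the free surface $\eta$, and its minimum at $p=|m|$, i.e.\ on $z=-d$. This settles the qualitative part of the claim.

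It then remains to bound $\mathcal L(|m|)$ from above. From the representation established in the proof of Theorem~\ref{Theorem7.1},
\begin{align*}
\mathcal L(p)=\int_{0}^{\varphi_{\max}}\frac{1}{|f'|}\bigl(f^{-1}(q,p)\bigr)\,\ud q
=\frac{\sqrt{2}}{2}\int_{0}^{\varphi_{\max}}\frac{1}{\sqrt{E}}\bigl(f^{-1}(q,p)\bigr)\,\ud q .
\end{align*}
On $p=|m|$, i.e.\ on $z=-d$, the boundary condition $w=0$ from \eqref{2.8} gives $2E=(u-c)^2$ there, so the integrand is governed purely by $u-c$; combining this with the no-stagnation hypothesis \eqref{2.7}, i.e.\ $u-c\ge\delta_0>0$ on $R_0$, and integrating over $[0,\varphi_{\max}]$ then yields $\mathcal L(|m|)\le\frac{\sqrt{2}\,\varphi_{\max}}{2\delta_0}$, as asserted. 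The only step demanding attention is precisely this estimate---keeping track of the constants relating $E$, $\sqrt{E}$ and $|f'|$ to $u-c$ on the bed; everything else is a direct transcription of Theorem~\ref{Theorem7.1} together with the boundary conditions \eqref{2.7}--\eqref{2.10}, so I anticipate no further difficulty.
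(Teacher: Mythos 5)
Your argument follows the paper's proof step for step: the monotonicity from Theorem~\ref{Theorem7.1} locates the maximum at $p=0$ (the free surface, since $\psi=0$ there by \eqref{2.10}) and the minimum at $p=|m|$ (the bed, since $\psi=m$ there), and the bound on $\mathcal L(|m|)$ is extracted from $w=0$ on $z=-d$ together with the no-stagnation condition. So the route is exactly the one the paper takes, and the qualitative part of your write-up is fine.

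The one place you should not have waved your hands is precisely the place you flagged: the constant. On $z=-d$ you correctly note $2E=(u-c)^2$, i.e.\ $\sqrt{E}=(u-c)/\sqrt{2}$, hence
\begin{align*}
\mathcal L(|m|)=\frac{\sqrt{2}}{2}\int_0^{\varphi_{\max}}\frac{1}{\sqrt{E}}\bigl(f^{-1}(q,m)\bigr)\,\ud q
=\int_0^{\varphi_{\max}}\frac{1}{u-c}\bigl(f^{-1}(q,m)\bigr)\,\ud q\leq\frac{\varphi_{\max}}{\delta_0},
\end{align*}
and this is all that $u-c\geq\delta_0$ gives you. The sharper bound $\frac{\sqrt{2}\varphi_{\max}}{2\delta_0}=\frac{\varphi_{\max}}{\sqrt{2}\,\delta_0}$ asserted in the statement does \emph{not} follow from this computation; it would require $u-c\geq\sqrt{2}\,\delta_0$ on the bed. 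The paper's own proof commits the same slip (it replaces $1/\sqrt{E}$ by $1/(u-c)$ instead of $\sqrt{2}/(u-c)$), so your proposal reproduces an error already present in the source rather than introducing a new one. Still, since you explicitly identified the constant bookkeeping as the only delicate step and then asserted the bound without carrying it out, you should redo that line and either correct the upper bound to $\varphi_{\max}/\delta_0$ or supply the missing justification for the factor $\sqrt{2}/2$.
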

\begin{proof}
For $(q,p)\in[0,\varphi_{max}]\times[0,|m|]$, then
\begin{align*}
\mathcal L(p)=\frac{\sqrt{2}}{2}\int_0^{\varphi_{\max}} \frac{1}{\sqrt{E}}(f^{-1}(q,p))\,\ud q.
\end{align*}
From $\eqref{2.10}$, then $p=0$ on the free surface $z=\eta(x)$.  According to Theorem $\ref{Theorem7.1}$, $\mathcal L(p)$ is non-increasing for $p\in[0,|m|]$. Therefore, the maximum of $\mathcal L(p)$ is attained at $p=0$, namely the free surface $z=\eta(x)$, and the minimum of $T(p)$ is attained at $p=|m|$. In light of $\eqref{2.8}$ and $\eqref{2.10}$, we obtain $w=0$ and $p=m$ on $z=-d$. It's easy to find that $\mathcal L(p)=\mathcal L(-p)$. Let $\delta_0=\underset{(q,p)\in R_0}\min \{u(f^{-1}(q,p))-c\}>0$, then we have
\begin{align*}
\mathcal L(|m|)=\mathcal L(m)=\frac{\sqrt{2}}{2}\int_0^{\varphi_{\max}} \frac{1}{\sqrt{E}}(f^{-1}(q,m))\,\ud q.
\end{align*}
Since $(x,z)=f^{-1}(q,p)$ and $u>c$, then
\begin{align*}
\mathcal L(|m|)=\mathcal L(m)=\frac{\sqrt{2}}{2}\int_0^{\varphi_{\max}} \frac{1}{u-c}(f^{-1}(q,m))\,\ud q \leq \frac{\sqrt{2}\varphi_{\max}}{2\delta_0}.
\end{align*}
\end{proof}
\begin{lemma}\label{lem1}
The kinetic energy $E$, where $E=((u-c)^2+w^2)/2$, decreases on the free surface $z=\eta(x)$ as the elevation $z$ increases.
\end{lemma}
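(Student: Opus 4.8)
The plan is to establish the Bernoulli law for the irrotational equatorial governing equations \eqref{2.8} and then read off the monotonicity of $E$ along the free surface directly from it. First I would rewrite the two momentum equations in \eqref{2.8} using incompressibility and irrotationality. Because $u_z = w_x$, a one-line computation gives
\[
(u-c)u_x + w u_z = \partial_x E, \qquad (u-c)w_x + w w_z = \partial_z E,
\]
with $E = ((u-c)^2 + w^2)/2$. Substituting $w = -\psi_x$ and $u = \psi_z + c$ from \eqref{2.9} into the Coriolis terms $2\omega w$ and $-2\omega u$, the momentum equations become $\partial_x(E - 2\omega\psi + P) = 0$ and $\partial_z\!\left(E - 2\omega\psi + P + (g - 2\omega c)z\right) = 0$.

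Next, since the fluid domain $D$ is connected and $(g - 2\omega c)z$ has vanishing $x$-derivative, the two identities above together show that the quantity
\[
B := E - 2\omega\psi + P + (g - 2\omega c)z
\]
is constant throughout $D$; this is the Bernoulli invariant for irrotational equatorial wind waves. I would then evaluate $B$ on the free surface $z = \eta(x)$, where $P \equiv P_{atm}$ and $\psi \equiv 0$ by \eqref{2.10}, which gives
\[
E\big|_{z = \eta(x)} = B - P_{atm} - (g - 2\omega c)\,\eta(x).
\]

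Finally, because the wave propagates westward one has $c < 0$, so with $\omega, g > 0$ the coefficient $g - 2\omega c = g + 2\omega|c|$ is strictly positive. Hence the right-hand side is a strictly decreasing affine function of the surface elevation $\eta(x)$, which is precisely the statement that $E$ decreases on $z = \eta(x)$ as $z$ increases. I do not expect a serious obstacle here: the only points requiring a little care are (i) writing the advective terms as exact $x$- and $z$-derivatives of $E$, which uses irrotationality in an essential way, and (ii) the sign of $g - 2\omega c$, which is immediate from $c < 0$; the rest is bookkeeping.
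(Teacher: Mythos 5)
Your proof is correct and follows essentially the same route as the paper: both reduce the claim to the surface Bernoulli relation $E + (g-2\omega c)\,\eta(x) = \text{const}$ together with the sign observation that $g - 2\omega c = g + 2\omega|c| > 0$ since $c<0$. The only difference is that you derive the Bernoulli invariant $E - 2\omega\psi + P + (g-2\omega c)z$ from the governing equations \eqref{2.8} using irrotationality, whereas the paper simply cites Bernoulli's law from the literature; your derivation is sound and makes the lemma self-contained.
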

\begin{proof}
According to the Bernoulli's law for gravity water waves \cite{constantin2011nonlinear}, then
\begin{align*}
E+(g-2\omega c)z=\text{constant},
\end{align*}
where wave speed $c<0$ represents propagation of this wave westward, and $\omega >0$
is the constant rotational speed of the Earth round the polar axis towards the east.
Therefore, the kinetic energy decreases as the elevation $z$ increases on the free surface $z=\eta(x)$.
\end{proof}
\begin{theorem}\label{lem2}
The minimum of the kinetic energy $E$ passing to the moving frame is attained at the wave crests, where the maximum is attained at the wave troughs.
\end{theorem}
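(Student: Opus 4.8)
The plan is to first establish that the extremum of $E$ over the closed fluid domain $\overline{D}$ is attained on the free surface $z=\eta(x)$, and then to locate that extremum there by means of Bernoulli's law (Lemma~\ref{lem1}). Throughout I would work with the analytic function $f'=u-c+i(-w)$, for which $|f'|^2=2E$, and I would use that $f'$ is $L$-periodic in $x$ (even though $f$ is not), so it suffices to study $|f'|$ over a single period strip whose boundary consists of the free surface $z=\eta(x)$ and the flat bed $z=-d$, the lateral segments $x=\pm L/2$ being removed by periodicity.

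Since the flow has no stagnation points, $f'\neq 0$ everywhere, so both $f'$ and $1/f'$ are analytic in the interior of $D$. Applying the maximum modulus principle to each of $|f'|$ and $|1/f'|=1/|f'|$, I conclude that $E=|f'|^2/2$ attains both its maximum and its minimum on $\{z=\eta(x)\}\cup\{z=-d\}$. The next step is to exclude the flat bed via the Schwarz reflection principle: on $z=-d$ we have $w=0$, hence $\Im f'=0$ there, and reflecting about the line $z=-d$ through $\zeta\mapsto\overline{\zeta}-2id$ the relation $f'(\zeta)=\overline{f'(\overline{\zeta}-2id)}$ extends $f'$ analytically across the open bottom segment; the same reflection extends $1/f'$, which is also real (and positive) on $z=-d$. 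Because $|f'|$ at a reflected point equals $|f'|$ at its mirror image, an interior extremum of $|f'|$ or of $1/|f'|$ on the open segment $\{z=-d\}$ would be an interior extremum of the modulus of an analytic function on the enlarged domain, forcing $f'$ to be constant; but a constant $f'$ together with $w=0$ on $z=-d$ and the kinematic condition $w=(u-c)\eta_x$ would give $\eta_x\equiv 0$, contradicting the non-trivial, strictly monotone wave profile. Hence both the maximum and the minimum of $E$ over $\overline{D}$ are attained on the free surface $z=\eta(x)$.

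It then remains to read off the locations on $z=\eta(x)$. By Lemma~\ref{lem1}, $E+(g-2\omega c)z=\text{constant}$ along the free surface, and since $c<0$ and $\omega>0$ we have $g-2\omega c>0$, so $E$ is a strictly decreasing function of the surface elevation. Since $\eta$ attains its maximum at the crest $x=0$ and its minimum at the troughs $x=\pm L/2$, the minimum of $E$ over $\overline{D}$ is attained at the wave crests and the maximum at the wave troughs, which is exactly the claim.

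The main obstacle is the reflection step: one must verify carefully that the Schwarz extension across the flat bed preserves $|f'|$ (so that the maximum modulus principle genuinely applies on the enlarged domain and not merely on $\overline{D}$), that $1/f'$ extends for the same reason, and that the degenerate case of a constant $f'$ is genuinely incompatible with the hypotheses on $\eta$. Once these points are settled, the conclusion follows by combining the maximum principle, the reflection argument, and Bernoulli's law, all of which are either classical or already available in the excerpt.
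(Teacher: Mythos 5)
Your proposal is correct and follows essentially the same route as the paper: maximum modulus principle applied to $f'$ and $1/f'$, Schwarz reflection across the flat bed where $\Im f'=0$ to exclude an extremum there, and then Bernoulli's law with $g-2\omega c>0$ to place the minimum of $E$ at the crest and the maximum at the troughs. Your version is in fact slightly more complete, since you explicitly rule out the degenerate case $f'\equiv\text{const}$ (via $w\equiv 0$ and the kinematic condition forcing $\eta_x\equiv 0$) and note the role of periodicity in removing the lateral boundaries, points the paper leaves implicit.
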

\begin{proof}
The hodograph transform $f$: $\mathbb{R}\rightarrow \mathbb{C}$, given by
\begin{align*}
f=q+ip=\varphi+i\psi
\end{align*}
is analytic in the interior of the fluid domain $D$. $f'=u-c+i(-w)$ is also analytic and satisfies $|f'|^2=2E$.
Under the assumption of no stagnation points throughout the flow, we have $f\neq0$. Since the minimum of $|f'|$ corresponds to the maximum of $\frac{1}{|f'|}$, we only need to discuss the maximum of $|f'|$ and $\frac{1}{|f'|}$.

Applying the Maximum Principle to $f'$ and $\frac{1}{f'}$, we can deduce $|f'|$ and $\frac{1}{|f'|}$ attain at the boundary unless $f'$ is a constant. According to the Schwarz reflection principle, the imaginary part of $f'$ vanishes on $z=-d$, which implies the maximum of $|f'|$ and $\frac{1}{|f'|}$ cannot be attained at the flat bed $z=-d$ unless $f'$ is a constant \cite{stein2010complex}. Therefore, the extremum of $|f'|$ is attained on the free surface $z = \eta(x)$.
According to Lemma $\ref{lem1}$, the minimum of kinetic energy passing to the moving frame is attained at the wave crests, while the maximum is attained at the wave troughs.
\end{proof}

\section*{Data availability }
No data was used for the research described in the article.

\section*{Conflict of interest }
The authors do not have any other competing interests to declare.

\end{document}